\documentclass[a4paper,reqno,oneside]{amsart}

\usepackage[
style=numeric,
sorting=nty,
maxnames=99,
maxalphanames=5,
sortcites,
date=year
]{biblatex}

\renewbibmacro*{volume+number+eid}{%
  \printfield{volume}%
  \printtext{ Vol }%
  \printfield{number}%
  \printtext{}%
  \setunit{\addcomma\space}%
  \printfield{eid}}

\usepackage[british]{babel}
\usepackage{csquotes}  
\numberwithin{equation}{section}
\usepackage{amssymb}
\usepackage[protrusion=true,expansion=true]{microtype}	
\usepackage{amsmath,amsfonts,amsthm} 
\usepackage[pdftex]{graphicx}	
\usepackage{url}
\usepackage{svg}
\usepackage[title,titletoc,page]{appendix} 
\usepackage{listings} 
\usepackage{color}
\usepackage{caption}
\usepackage{subcaption}
\usepackage{algorithm}
\usepackage{dsfont} % for 1 bbmath
\usepackage{bm}
\usepackage{upgreek}
\usepackage[normalem]{ulem}
\usepackage{enumerate}
\usepackage{mathtools}
\usepackage{lipsum}
\usepackage{adjustbox}
\usepackage{tikz}
\usepackage{circuitikz}
\usepackage{tcolorbox}
\usepackage{xargs}      % Use more than one optional parameter

\usepackage{todonotes}
\usepackage{svg}
\usepackage{nicematrix}

\usepackage[T1]{fontenc} % To switch to the T1 encoding
\usepackage{lmodern} % To switch to Latin Modern
\rmfamily 
\DeclareFontShape{T1}{lmr}{b}{sc}{<->ssub*cmr/bx/sc}{}
\DeclareFontShape{T1}{lmr}{bx}{sc}{<->ssub*cmr/bx/sc}{}

\usepackage[left=2.4cm,right=2.4cm,top=2.9cm,bottom=3.1cm,footskip=1.5cm,headsep=1cm]{geometry}
\usepackage[%
pdfauthor={Yannick De Bruijn, Erik Orvehed Hiltunen},%
pdftitle={Open Limit of banded Toeplitz matrices},
pdfsubject={Open Limit of banded Toeplitz matrices}]{hyperref}

\usepackage{orcidlink}
\usepackage{fancyhdr}
\fancypagestyle{plain}{%
\fancyhead[C]{}
\fancyfoot[C]{}

}

\fancypagestyle{nosection}{%
\fancyhead[CE]{}
\fancyhead[CO]{}
\fancyfoot[CE,CO]{\thepage}
}
\numberwithin{equation}{section}		% Equationnumbering: section.eq#
\numberwithin{figure}{section}			% Figurenumbering: section.fig#
\numberwithin{table}{section}			% Tablenumbering: section.tab#

\newcommand{\vect}[1]{\boldsymbol{\mathbf{#1}}}

\DeclareMathOperator{\C}{\mathbb{C}}

\newcommand{\N}{\mathbb{N}}

\newcommand{\R}{\mathbb{R}}

\newcommand{\Z}{\mathbb{Z}}

\renewcommand{\epsilon}{\varepsilon}

\DeclareMathOperator{\dos}{\mathrm{DoS}}

\renewcommand{\tilde}{\widetilde}
\renewcommand{\hat}{\widehat}
\renewcommand{\i}{\mathrm{i}}
\renewcommand{\d}{\,\mathrm{d}}

\usepackage[noabbrev,capitalize]{cleveref}
\crefname{proposition}{Proposition}{Propositions}
\crefname{equation}{}{}

\newtheorem{theorem}{Theorem}[section]
\newtheorem{lemma}[theorem]{Lemma}
\newtheorem{proposition}[theorem]{Proposition}
\newtheorem{corollary}[theorem]{Corollary}
\newtheorem{conjecture}[theorem]{Conjecture}

\theoremstyle{definition}
\newtheorem{definition}[theorem]{Definition}
\newtheorem{assumption}[theorem]{Assumption}

\newtheorem{remark}[theorem]{Remark}

\crefname{assumption}{Assumption}{Assumptions}
\crefname{definition}{Definition}{Definitions}
\crefname{corollary}{Corollary}{Corollaries}
\crefname{enumi}{item}{items}

\begin{document}
\title[Mathematical Foundation for the Generalised Brillouin zone of m-banded Toeplitz operators.]{Mathematical Foundation for the Generalised Brillouin zone of m-banded Toeplitz operators.}

\author[Y. de Bruijn]{Yannick de Bruijn$^*$ \,\orcidlink{0009-0009-6194-0761}}
\address{\parbox{\linewidth}{Yannick de Bruijn\\
Department of Mathematics, University of Oslo, Moltke Moes vei 35, 0851 Oslo, Norway.
\\
\href{https://orcid.org/0009-0009-6194-0761}{orcid.org/0009-0009-6194-0761}}}
\email{yannicd@math.uio.no}

\author[E. O. Hiltunen]{Erik Orvehed Hiltunen \,\orcidlink{0000-0003-2891-9396}}
\address{\parbox{\linewidth}{Erik Orvehed Hiltunen\\
Department of Mathematics, University of Oslo, Moltke Moes vei 35, 0851 Oslo, Norway. \\
\href{http://orcid.org/0000-0003-2891-9396}{orcid.org/0000-0003-2891-9396}.}}
\email{erikhilt@math.uio.no}

%%%-------------------------------------------------------
\begin{abstract}
    We show that the open-boundary limit of a banded Toeplitz matrix is real whenever its two middle roots trace out a closed $C^1$ polar curve, with the remaining roots strictly separated in modulus. Building on this result, we develop both analytical and numerical methods to asymptotically symmetrise a class of banded non-Hermitian Toeplitz matrices whose asymptotic spectra are real. Finally, we provide a rigorous mathematical foundation for the generalised Brillouin zone, a concept widely used in non-Hermitian physics, by proving that it coincides with the polar curve on which the symbol function takes real values.
\end{abstract}
\maketitle
\vspace{3mm}
\noindent
\textbf{Mathematics Subject Classification (MSC2020): } 15A18, %Eigenvalues, singular values, and eigenvectors
15B05, %Toeplitz, Cauchy, and related matrices,
65F15  %Numerical computation of eigenvalues and eigenvectors of matrices

\vspace{3mm}
\noindent
\textbf{Keywords:} Non-Hermitian Hamiltonian, generalised Brillouin zone, density of states, asymptotic spectra, non-Bloch Band theory, open boundary conditions.

\vspace{5mm}

\section{Introduction}

Toeplitz matrices are a class of structured matrices in which each diagonal is filled with the same value. These matrices frequently appear as Hamiltonians that model systems with a periodic structure, for instance, periodic media or metamaterials.
The so called \emph{open limit} of the spectrum of Toeplitz matrices, see \cite{BookSpectraBandedToepllitz} for an overview, is of particular interest as it corresponds to the asymptotic spectrum of a finite Toeplitz matrix which is growing in dimension. For non-Hermitian matrices, it is widely understood that the open spectrum cannot be described using conventional Floquet-Bloch band theory. In the specific case of tridiagonal Toeplitz matrices, it was demonstrated in \cite{ammari2024generalisedbrillouinzonenonreciprocal,debruijn2025complexbrillouinzonelocalised} that the open limit is instead accurately captured by complex Floquet-Bloch boundary conditions, where the imaginary part of the quasimomentum is constant and non-zero. In the more general $m$-banded case, the imaginary part of the Bloch quasimomentum may vary across the spectrum and is specified by the generalised Brillouin zone \cite{ammari2024generalisedbrillouinzonenonreciprocal,ashida.gong.ea2020NonHermitian, PhysRevLett.121.086803, PhysRevLett.125.126402, Okuma2020}. Although the generalised Brillouin zone is a widely used tool in the physics literature, a number of mathematical questions remain unanswered. 

An open problem to this date is to find a criterion on the symbol function that guarantees the open limit to be real-valued. Although \cite{RealSpectrumToeplitz} previously addressed this question, the subsequent corrigendum \cite{CorrectionRealAsymptoticSpectrum}, by the same authors, identified an error in one of their proofs, thereby leaving the problem open. In this manuscript, we partially address this gap by establishing a result under stronger assumptions than those stated in \cite{RealSpectrumToeplitz}.
We prove that if it is possible to find a closed polar curve (i.e. a Jordan curve which can be parametrised by the angle of the polar coordinates) along which the symbol function evaluates to real numbers, the open limit will be real.
Evaluating the symbol function along this polar curve, rather than the standard unit circle, produces a transformed Toeplitz matrix which will be Hermitian. In particular, we will demonstrate that both the open limit and the density of states remain unchanged under this transformation. Furthermore, we illustrate numerically that the eigenvalues converge pointwise, which demonstrates that the symbol transform can be seen as an asymptotic similarity transform between a class of non-Hermitian and Hermitian Toeplitz matrices.

We revisit some concepts from Toeplitz theory that were known since the 1960s \cite{ToeplitzOriginalPaper, HirschmannSpectra, bams/1183529098}, in particular, the results from \cite{Schmidt_Spitzer_1960, Widom, bams/1183529098} will allow us to define the correct polar curve on which we evaluate the symbol function.
We illustrate that the polar curve is the natural generalisation of complex band structure in tridiagonal Toeplitz matrices \cite{debruijn2025complexbandstructurelocalisation, debruijn2025spectrapseudospectranonhermitiantoeplitz, ammari2024spectra}. 
Moreover, we will make an important connection between this polar curve and generalised Brillouin zone, and demonstrate that our method has  widespread use in the asymptotic study of non-Hermitian Bloch Hamiltonians explored in condensed matter physics.

An important application of our method is in the realm of non-Hermitian physics. A prominent example is the non-Hermitian skin effect, which refers to the localisation of the bulk eigenmodes at the boundary of a non-Hermitian open system \cite{ashida.gong.ea2020NonHermitian, PhysRevLett.121.086803, PhysRevLett.125.126402, Okuma2020, PhysRevB.111.035109}.
This new framework enables to extend the tight binding Hamiltonians with non-reciprocal coupling, as introduced by Hatano and Nelson \cite{PhysRevB.58.8384, PhysRevLett.77.570} to models with long range coupling.

This paper is organised as follows. In Section \ref{sec: Toeplitz Analysis}, we recall the most important results from Toeplitz theory and present a criterion on the symbol function that guarantees the reality of the open limit. We demonstrate that the open limit remains unchanged under a deformation of the torus on which the symbol function is evaluated. Moreover, we provide numerical evidence that, in the limit of large matrix size, the two sets of eigenvalues of the finite Toeplitz matrices converge.
We prove that under the assumptions that the open limit is real, the density of states of a non-Hermitian matrix Toeplitz matrices agrees with that of the Hermitian matrix which is generated by deforming the unit torus to the generalised Brillouin zone.
In Section \ref{sec: Bloch Hamiltonian} we link the results from Section \ref{sec: Toeplitz Analysis} and provide a mathematical foundation for the generalised Brillouin zone.
In Section \ref{sec: Conclusion} we conclude on our work and present future works and possible applications of the newly developed theory. 
The Matlab code that supports the findings of this article is available in a repository linked in Section \ref{Sec: Data availability}.

\section{Toeplitz Analysis}\label{sec: Toeplitz Analysis}
This paper is devoted to the study of the asymptotic spectra of banded Toeplitz matrices. A Toeplitz operator $\mathbf{A}$ is generated by the real-valued sequence $\{a_k\}_{k \in \Z} \subset \R$ and is given by the semi-infinite matrix,
\begin{equation}\label{eq: def Toeplitz Operator}
    \mathbf{A} = \begin{pmatrix}
        a_0 & a_{-1} & a_{-2} & a_{-3} & \cdots\\
        a_1 & a_0    & a_{-1} & a_{-2} & \cdots\\
        a_2 & a_1 & a_0 & a_{-1} & \cdots \\
        a_3 & a_2 & a_1 &a_0 & \cdots\\
        \vdots & \vdots & \vdots & \vdots & \ddots
    \end{pmatrix}.
\end{equation}
Throughout this paper we assume $m\geq 1$ and define a $m$-banded Toeplitz operator as 
\begin{equation} \label{eq: def banded Toeplitz Operator}
    \mathbf{A}_m = \begin{cases}
        \mathbf{A}_{i,j}, &\text{ for } |i-j| \leq m,\\
        0, &\text{ for } |i-j| > m.
    \end{cases}
\end{equation}
We will illustrate our results on Toeplitz matrices with non-reciprocal algebraic off-diagonal decay, that is
\begin{equation}
    \begin{cases}
        a_k = \mathcal{O}(|k|^{-\kappa}),~ k > 0,\\
        a_k = \mathcal{O}(|k|^{-\rho}),~k < 0.
    \end{cases}
\end{equation}

A Toeplitz operator is a bounded operator on $\ell^2$ if and only if it is generated by a function $f\in L^\infty$ \cite{ToeplitzOriginalPaper}.
This generating function is commonly referred to as the \emph{symbol function} as its Fourier coefficients generate the sequence $\{a_k\}_{k = -\infty}^\infty$, that is
\begin{equation}\label{eq: def symbol function operator}
    f(z) = \sum_{k = -\infty}^\infty a_k z^{-k}.
\end{equation}
For an $m$-banded Toeplitz operator \eqref{eq: def banded Toeplitz Operator}, the symbol function is given by the finite Laurent polynomial,
\begin{equation}\label{eq: def symbol function banded}
    f_m(z) = \sum_{k = - m}^m a_k z^{-k},\quad a_{\pm m}\neq 0.
\end{equation}
In the sequel of this paper, we shall refer to the Toeplitz operator $\vect{A}$, specified in equation \eqref{eq: def Toeplitz Operator}, by the notation $\vect{A} = \vect{T}(f)$ and for the banded Toeplitz operator \eqref{eq: def banded Toeplitz Operator}, by $\mathbf{A}_m = \mathbf{T}(f_m)$. We note that the symbol function $f_m$ is a special case of $f$, obtained by setting $a_j = 0$ for all $|j| > m$.

We now define the region of non-trivial winding. We let $\mathbb{T}$ denote the one dimensional unit circle in $\mathbb{C}$, that is $\mathbb{T} = e^{-\i[0, 2\pi]}$, then
\begin{equation}\label{def: winding region}
     \sigma_\text{wind} := \bigl\{\lambda \in \C \setminus f(\mathbb{T}) : \operatorname{wind} \bigl(f, \lambda \bigr) \neq 0 \bigr\},
\end{equation}
where the winding is defined as
\begin{equation}
    \operatorname{wind}(f,\lambda) := \frac{1}{2\pi \i} \int_{\mathbb{T}} \frac{f'(z)}{f(z)-\lambda}\,dz, \quad \lambda \in \mathbb{C}\setminus f(\mathbb{T}).
\end{equation}
The spectrum of a Toeplitz operator is given by the following result \cite[Theorem 1.17]{LargeTruncatedToeplitz}.

\begin{theorem}[Gohberg]\label{thm: Gohberg Spectrum for baned Toeplitz operator}
    The operator $\mathbf{T}(f)$ is Fredholm on $\ell^2$ if  and only if $f(e^{\i\alpha}) \neq 0, ~ \forall \alpha \in [0, 2\pi)$, in which case
    \begin{equation}
        \operatorname{Ind}\mathbf{T}(f) = - \operatorname{wind}(f, 0 )
    \end{equation}
    and the spectrum is given by
    \begin{equation}
        \sigma\bigl(\mathbf{T}(f)\bigr) =  f(\mathbb{T}) \cup \sigma_\mathrm{wind},
    \end{equation}
    where $\sigma_\text{wind}$ was defined in \eqref{def: winding region}.
\end{theorem}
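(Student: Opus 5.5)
Since the statement is the classical theorem of Gohberg, I would prove it in three steps. First reduce to continuous symbols (in the banded case $f=f_m$ is a Laurent polynomial, which is the setting used throughout). Then establish the Fredholm criterion together with the index formula. Finally deduce the spectrum by applying that criterion to $f-\lambda$.

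\textbf{Fredholm criterion and index.} The central tool is the commutator identity $\mathbf{T}(fg)-\mathbf{T}(f)\mathbf{T}(g)=\mathbf{H}(f)\mathbf{H}(\tilde g)$, where $\mathbf{H}$ denotes Hankel operators and $\tilde g(z)=g(1/z)$.
\begin{itemize}
\item For continuous symbols the Hankel operators are compact; for Laurent polynomials they are even finite rank.
\item Consequently, if $f$ has no zeros on $\mathbb{T}$, then $\mathbf{T}(1/f)$ is a Fredholm regulariser of $\mathbf{T}(f)$, so $\mathbf{T}(f)$ is Fredholm.
\item Conversely, if $f(e^{\i\alpha_0})=0$, I would build approximate eigenvectors: truncated, shifted Fourier modes $x_n=n^{-1/2}\sum_{j=n}^{2n}e^{-\i j\alpha_0}e_j$ give $\|\mathbf{T}(f)x_n\|\to0$ with $x_n\rightharpoonup0$. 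The same holds for $\mathbf{T}(f)^*=\mathbf{T}(\bar f)$, so $\mathbf{T}(f)$ cannot be Fredholm.
\end{itemize}

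For the index, factor $f_m(z)=a\,z^{-m}\prod_j(z-z_j)$ and use homotopy invariance of the index within Fredholm operators. Roots can be moved radially to $0$ or $\infty$ without crossing $\mathbb{T}$. This deforms $f$ through non-vanishing symbols to $c\,z^{-k}$ with $k=\operatorname{wind}(f,0)$, up to the sign convention $\mathbb{T}=e^{-\i[0,2\pi]}$ in the paper. The Toeplitz operator with symbol $z^{\mp1}$ is a unilateral shift, whose index is $\mp1$; powers then give index $-k$. One could alternatively use a Wiener--Hopf factorisation $f=f_-z^{k}f_+$.

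\textbf{Spectrum.} Apply the criterion to $f-\lambda$, noting that $\mathbf{T}(f)-\lambda=\mathbf{T}(f-\lambda)$.
\begin{itemize}
\item If $\lambda\in f(\mathbb{T})$, the operator $\mathbf{T}(f-\lambda)$ is not Fredholm, so $\lambda\in\sigma$.
\item If $\lambda\notin f(\mathbb{T})$ and the winding is nonzero, the index is nonzero, so the operator is not invertible.
\item If the winding is zero, the operator is Fredholm of index zero. To show it is invertible I would use Coburn's lemma: for nonzero $g$, either $\ker\mathbf{T}(g)=0$ or $\ker\mathbf{T}(g)^*=0$. Its proof uses that a kernel element $x$ and an adjoint-kernel element $y$ yield boundary functions whose product $\bar g\,\bar x\,y$-type relation forces an analytic function with vanishing real part in the $H^2$ sense to be zero, by the F.~and M.~Riesz theorem. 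With index zero, this gives invertibility.
\end{itemize}

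\textbf{Main obstacle.} I expect the hardest step to be the careful bookkeeping of the index sign relative to the paper's orientation of $\mathbb{T}$ and its convention $f(z)=\sum a_kz^{-k}$. I would fix this by checking a single example, the shift operator, directly.
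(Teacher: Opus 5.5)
The paper gives no proof of its own here; it quotes the result from \cite[Theorem 1.17]{LargeTruncatedToeplitz}. Your proposal is a correct reconstruction of that standard argument: the identity $\mathbf{T}(fg)-\mathbf{T}(f)\mathbf{T}(g)=\mathbf{H}(f)\mathbf{H}(\tilde g)$ for the regulariser, a singular sequence for necessity, homotopy to $c\,z^{-k}$ and the shift for the index, and Coburn's lemma at index zero, with your signs correctly matched to the paper's conventions $f(z)=\sum_k a_k z^{-k}$ and clockwise $\mathbb{T}$.

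The only slip is your aside on Coburn's lemma, where no ``vanishing real part'' enters. Instead, $\bar g\,\bar x\,y$ has Fourier coefficients supported in both $\{k\geq 1\}$ and $\{k\leq -1\}$, so it vanishes identically, and the F.~and M.~Riesz theorem (a nonzero $H^2$ function cannot vanish on a set of positive measure) then forces $x=0$ or $y=0$.
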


For later application, we will introduce the spectra of large truncated Toeplitz operators.
To define a finite Toeplitz matrix, we introduce the projection operator,
\begin{align}
    \vect{P}_n: \ell^2(\N, \mathbb{C}) &\to \ell^2(\N, \mathbb{C})\\
    (x_1, x_2, x_3, \dots) &\mapsto (x_1, \dots, x_n, 0, 0, \dots).
\end{align}
A finite Toeplitz matrix is derived from a Toeplitz operator by performing the following truncation,
\begin{equation}\label{eq: truncate operator to matrix}
    \vect{T}_{n}(f) := \vect{P}_{n} \vect{T}(f) \vect{P}_{n}.
\end{equation}
We will be particularly interested in finding the smallest set on which the eigenvalues will cluster as the matrix dimension becomes large. The limiting spectrum of a finite but growing Toeplitz matrix will be referred to as the \emph{open limit} and is defined in the Hausdorff sense as
\begin{equation}\label{def: open spectrum}
    \sigma_{\mathrm{open}}\bigl(\mathbf{T}(f)\bigr) := \lim_{n \to\infty}\sigma\bigl(\mathbf{T}_n(f)\bigr).
\end{equation}
A useful characterisation of the limiting set has been presented in \cite[Section 5.8]{LargeTruncatedToeplitz}, with the foundational paper being \cite{Schmidt_Spitzer_1960}.

\begin{theorem}[Schmidt-Spitzer]\label{thm: schmidt spitzer magnitude}
    Let $f_m$ be a Laurent polynomial as defined in \eqref{eq: def symbol function banded}, then
    \begin{align}
        \lim_{n \to\infty}\sigma\bigl(\mathbf{T}_n(f_m)\bigr) &= \bigcap_{r \in (0, \infty)} \sigma\Bigl(\mathbf{T}\bigl(f_m(r\mathbb{T})\bigr)\Bigr)\\
        &= \bigl\{ \lambda \in \C ~:~ |z_{m}(\lambda)| = |z_{m+1}(\lambda)| \bigr\} \label{eq: modulus definition},
    \end{align}
    where $z_i(\lambda)$ are the roots of the polynomial equation $z^m\bigl(f_m(z)-\lambda\bigr)= 0$ sorted in ascending magnitude, that is, $0< |z_1(\lambda)| \leq |z_2(\lambda)| \leq \dots \leq |z_{2m}(\lambda)|$.
\end{theorem}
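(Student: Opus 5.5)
We prove the two equalities in the Schmidt--Spitzer theorem by passing through the root-modulus description in \eqref{eq: modulus definition}. Throughout, write $\Lambda := \{\lambda : |z_m(\lambda)| = |z_{m+1}(\lambda)|\}$.

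\textbf{Step 1: the intersection equals $\Lambda$.} For $r>0$, note that $\mathbf{T}(f_m(r\cdot))$ is similar, on finite sections, to $\mathbf{T}_n(f_m)$ via $D_r=\operatorname{diag}(r,r^2,\dots,r^n)$; on the operator level it is simply another banded Toeplitz operator. By \cref{thm: Gohberg Spectrum for baned Toeplitz operator}, $\lambda\notin\sigma(\mathbf{T}(f_m(r\mathbb{T})))$ if and only if $f_m-\lambda$ has no zero on $r\mathbb{T}$ and its winding around $r\mathbb{T}$ is zero.

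By the argument principle, this winding equals $\#\{i: |z_i(\lambda)|<r\} - m$, since $z^m(f_m(z)-\lambda)$ is a polynomial and the factor $z^{-m}$ contributes $-m$. Hence $\lambda$ lies outside the $r$-spectrum exactly when precisely $m$ roots lie strictly inside $r\mathbb{T}$, which means $|z_m(\lambda)|<r<|z_{m+1}(\lambda)|$. Such an $r$ exists if and only if $|z_m|<|z_{m+1}|$. Taking complements gives the identity between the intersection and $\Lambda$.

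\textbf{Step 2: $\limsup \sigma(\mathbf{T}_n)\subseteq\Lambda$.} Take $\lambda\notin\Lambda$ and choose $r$ with $|z_m|<r<|z_{m+1}|$. By Step 1, $\mathbf{T}(f_m(r\cdot))-\lambda$ is Fredholm of index $0$. Its kernel is trivial by Coburn's lemma, so it is invertible, and the same holds for the transposed symbol. Stability of the finite section method for banded Toeplitz operators (Gohberg--Feldman: the sections of both $\mathbf{T}(g)$ and $\mathbf{T}(\tilde g)$ are invertible) then gives $\|(\mathbf{T}_n(f_m(r\cdot))-\lambda)^{-1}\|\le C$ uniformly in $n$ and locally uniformly in $\lambda$, since the roots depend continuously on $\lambda$.

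Because $\mathbf{T}_n(f_m(r\cdot))$ is similar to $\mathbf{T}_n(f_m)$, the point $\lambda$ is not an eigenvalue of $\mathbf{T}_n(f_m)$. The same holds on a neighbourhood of $\lambda$, so such $\lambda$ are not limit points of the eigenvalues.

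\textbf{Step 3 (main obstacle): $\Lambda\subseteq\liminf\sigma(\mathbf{T}_n)$.} Here we use the explicit determinant formula (Widom / Baxter--Schmidt). For distinct roots,
\[
\det(\mathbf{T}_n(f_m)-\lambda)=\sum_{|M|=m} C_M(\lambda)\, w_M(\lambda)^{n},
\]
where $M$ ranges over $m$-subsets of the roots and $w_M=(-1)^{?}a_{-m}\prod_{i\in M} z_i^{-1}$ (up to the sign and coefficient convention). The dominant term corresponds to $M=\{z_{m+1},\dots,z_{2m}\}$ when $|z_m|<|z_{m+1}|$.

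Suppose $\lambda_0\in\Lambda$ but a neighbourhood $U$ of $\lambda_0$ is free of eigenvalues for infinitely many $n$. Then $\frac1n\log|\det(\mathbf{T}_n-\lambda)|$ is harmonic on $U$ and converges to $\log|a_{-m}|+\sum_{i>m}\log|z_i|$ off $\Lambda$. Recall that $\Lambda$ is a finite union of analytic arcs where $|z_m|=|z_{m+1}|$. Across such an arc, the limit function $\sum_{i>m}\log|z_i|$ (up to the constant $\log|a_{-m}|$) fails to be harmonic: its Laplacian is a positive measure there, since the dominant set switches from one root to the other and $z_m\neq z_{m+1}$ generically.

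Harmonicity is preserved under locally uniform limits; for subharmonic functions that converge in $L^1_{\mathrm{loc}}$, we use Hartogs-type compactness. This gives a contradiction with the harmonicity on $U$.

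The delicate points are the branch points where roots coincide, and the requirement that the coefficient $C_M$ of the dominant term does not vanish identically. We plan to handle these by working off a finite exceptional set and using closedness of $\Lambda$.
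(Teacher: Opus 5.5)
The paper gives no proof of this statement; it quotes it from B\"ottcher--Silbermann (Section 5.8) and Schmidt--Spitzer. Your outline should therefore be measured against the classical argument, which it follows closely.

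Steps 1 and 2 are correct. By the argument principle, the winding number of $f_m(r\cdot)-\lambda$ about $0$ is $\#\{i:|z_i(\lambda)|<r\}-m$, which gives the intersection formula. The similarity $\mathbf{T}_n(f_m(r\cdot))=D_r^{-1}\mathbf{T}_n(f_m)D_r$, together with Coburn's lemma and Gohberg--Feldman stability, gives $\limsup_n\sigma(\mathbf{T}_n(f_m))\subseteq\Lambda$. Local uniformity in $\lambda$ comes more cleanly from a Neumann series applied to the uniform inverse bound at one $\lambda$ than from continuity of the roots. Three minor points:
\begin{enumerate}
\item In Widom's formula one has $w_M=(-1)^m a_{-m}\prod_{i\in M}z_i$, not $\prod_{i\in M}z_i^{-1}$. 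Otherwise the $m$ smallest roots would dominate, contradicting your stated limit.
\item $C_M=\prod_{j\in M}z_j^{m}\prod_{j\in M,\,k\notin M}(z_j-z_k)^{-1}$ never vanishes for distinct nonzero roots, so that worry is void.
\item All $\sigma(\mathbf{T}_n(f_m))$ lie in the disc of radius $\sum_k|a_k|$. Hence $\limsup\subseteq\Lambda\subseteq\liminf$ gives the Hausdorff convergence used in the paper's definition.
\end{enumerate}

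The real gap is in Step 3, exactly where you flag it. The normal-family part is fine. Along your subsequence, $u_n=\frac1n\log|\det(\mathbf{T}_n(f_m)-\lambda)|$ is harmonic on $U$, bounded above by $\log(\|\mathbf{T}(f_m)\|+|\lambda|)$, and converges off $\Lambda$ and a finite set. So by Harnack, $u=\log|a_{-m}|+\sum_{i>m}\log|z_i|$ would be harmonic on $U$.

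What is missing is that $u$ is harmonic in no neighbourhood of \emph{any} point of $\Lambda$. You argue this only at points where exactly two simple roots lie on the middle circle, and propose to discard ``a finite exceptional set''. That does not work, for two reasons:
\begin{enumerate}
\item Such points need not be dense in $\Lambda$, and may not exist at all. For $f_2(z)=z^2+z^{-2}$ one has $\Lambda=[-2,2]$, and for every $\lambda\in\Lambda$ all four roots lie on $|z|=1$.
\item Even when they are dense away from finitely many points, reaching the remaining points by closedness requires that those points are not isolated in $\Lambda$. You do not prove this, and near an isolated point $u$ would be harmonic, so your argument gives nothing there.
\end{enumerate}

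A clean repair handles all cases at once. Put $s_k(\lambda):=\sum_{i>k}\log|z_i(\lambda)|=\max_{|M|=2m-k}\sum_{i\in M}\log|z_i(\lambda)|$.
\begin{enumerate}
\item Near every $\lambda$ with simple roots, $s_k$ is a maximum of harmonic functions. It is continuous everywhere, so it is subharmonic on $\C$, since the finitely many critical values are removable.
\item The function $\psi:=\log|z_{m+1}|-\log|z_m|=2s_m-s_{m-1}-s_{m+1}\ge 0$ vanishes exactly on $\Lambda$.
\item If $s_m$ were harmonic on a disc $U\ni\lambda_0\in\Lambda$, then $\psi$ would be superharmonic on $U$ with an interior minimum $0$ at $\lambda_0$. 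Hence $\psi\equiv 0$ and $U\subseteq\Lambda$.
\item This contradicts the fact that $\Lambda$ has empty interior, which follows from the arc structure you already invoke.
\end{enumerate}
This covers branch points, potential isolated points and ties among more than two roots simultaneously, and it replaces the ``positive Laplacian on the arcs'' claim.
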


Tracking the magnitude of the middle root also plays a central role in pseudoeigenvectors construction as roots with $|z_{m+1}(\lambda)|<1$ generate exponentially localised pseudoeigenvectors. An explicit construction is given in \cite{debruijn2025spectrapseudospectranonhermitiantoeplitz} and a visualisation can be found in Figure \ref{Fig: Hamiltonian crossing}.
The representation of the open limit given by \eqref{eq: modulus definition} allows to deduce topological and analytical properties \cite{doi:10.1137/23M1587804}. This set equals the union of a finite number of pairwise disjoint open analytic arcs and a finite number of the so-called \emph{degenerate points}. We say that $\lambda \in \bigl\{ \lambda \in \C ~:~ |z_{m}(\lambda)| = |z_{m+1}(\lambda)| \bigr\}$ is a degenerate or exceptional point if the polynomial $f_m(z)-\lambda$ has confluent roots. It is not hard to see that this is the case for at most $2m$ discrete roots which are characterised by the roots of $f_m'(z)= 0$ and therefore represent a set of measure $0$ in the complex plane. The connectedness of the spectrum of Toeplitz operators was first established by Widom in \cite{Widom}. A similar result for the open limit was first demonstrated in \cite{bams/1183529098}.
\begin{theorem}[Ullman]\label{thm: connectedness of open limit}
    Let $f_m$ be a Laurent polynomial, then $\sigma_{\mathrm{open}}\bigl(\mathbf{T}(f_m)\bigr)$ is a connected set.
\end{theorem}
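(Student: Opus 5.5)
We argue by contradiction, using the characterisation of the open limit as a Hausdorff limit of the finite spectra together with the description \eqref{eq: modulus definition} from \cref{thm: schmidt spitzer magnitude}. Write $\Lambda := \sigma_{\mathrm{open}}\bigl(\mathbf{T}(f_m)\bigr) = \{\lambda : |z_m(\lambda)| = |z_{m+1}(\lambda)|\}$. We first show that $\Lambda$ is compact and nonempty. Boundedness holds because $\Lambda \subset \sigma(\mathbf{T}(f_m))$, which lies in the closed disc of radius $\sum_k |a_k|$. Closedness holds because the roots $z_i(\lambda)$, sorted by modulus, have moduli depending continuously on $\lambda$, so $\lambda \mapsto |z_m(\lambda)| - |z_{m+1}(\lambda)|$ is continuous. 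Nonemptiness follows from \cref{thm: schmidt spitzer magnitude} itself, since each finite spectrum is nonempty and lies in a fixed compact set.

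Suppose now that $\Lambda = K_1 \cup K_2$, where $K_1, K_2$ are disjoint, nonempty and closed, and let $\delta = \operatorname{dist}(K_1, K_2) > 0$. The plan is to produce a Jordan curve $\Gamma$ in $\C \setminus \Lambda$ that separates $K_1$ from $K_2$; this is possible since $\Lambda$ is compact and the $K_i$ are a positive distance apart. Take, for example, the boundary of a union of small squares covering $K_1$ at mesh $\delta/3$. By Hausdorff convergence, for all large $n$ the spectrum $\sigma(\mathbf{T}_n(f_m))$ lies in a $\delta/4$-neighbourhood of $\Lambda$, and so avoids $\Gamma$. Consequently the number of eigenvalues of $\mathbf{T}_n(f_m)$ enclosed by $\Gamma$,
\[
N_n = \frac{1}{2\pi \i}\oint_\Gamma \frac{D_n'(\lambda)}{D_n(\lambda)}\,d\lambda, \qquad D_n(\lambda) = \det\bigl(\mathbf{T}_n(f_m) - \lambda\bigr),
\]
is well defined. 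Both $N_n$ and $n - N_n$ tend to infinity, because points of both $K_1$ and $K_2$ are limits of eigenvalues. What we actually need is more than this: a positive proportion of the eigenvalues must lie on each side of $\Gamma$.

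The key analytic input is the Widom/Schmidt--Spitzer asymptotic formula for $D_n(\lambda)$ off $\Lambda$. For $\lambda \notin \Lambda$, one has $D_n(\lambda) \sim C(\lambda)\,\bigl((-1)^m a_{-m} z_{m+1}(\lambda)\cdots z_{2m}(\lambda)\bigr)^n$ with $C(\lambda) \neq 0$, uniformly on compact subsets of $\C\setminus\Lambda$. Hence $\frac1n \log|D_n(\lambda)| \to G(\lambda) := \log|a_{-m}| + \sum_{j>m}\log|z_j(\lambda)|$ locally uniformly there, and the normalised counting measures $\mu_n$ of the eigenvalues converge weakly to $\mu = \frac{1}{2\pi}\Delta G$, a probability measure supported on $\Lambda$. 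Moreover $\Lambda = \operatorname{supp}\mu$: at any point of $\Lambda$ that is not degenerate, $G$ fails to be harmonic, because the index set $\{j > m\}$ jumps across the arc. This identification of the support with all of $\Lambda$ is the main obstacle, and I would handle it by analysing $G$ locally near an analytic arc of $\Lambda$ using the analytic-arc structure recalled before the theorem.

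With the measure in hand, weak convergence gives $N_n/n \to \mu(\operatorname{int}\Gamma) = \mu(K_1) \in (0,1)$. Now observe that on $\Gamma \subset \C\setminus\Lambda$ the functions $z_j(\lambda)$ for $j > m$ form a well-defined unordered set varying analytically, so $\frac{1}{n}\,\frac{D_n'}{D_n} \to \partial_\lambda \sum_{j>m}\log z_j(\lambda)$ uniformly on $\Gamma$. This makes $\frac{N_n}{n} \to \frac{1}{2\pi \i}\oint_\Gamma \sum_{j>m} \frac{z_j'(\lambda)}{z_j(\lambda)}\,d\lambda$, which is a winding number of the product $\prod_{j>m} z_j$ along $\Gamma$. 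To reach a contradiction I would argue as Ullman does. Inside $\Gamma$ but off $\Lambda$, the function $\prod_{j>m} z_j$ is single valued, and its only possible zeros or poles are at $z=0$ or $z=\infty$, which occur only at $\lambda=\infty$. Its branch structure across $K_1$ can be treated by deforming $\Gamma$. The point is that the contributions from $K_1$ must carry total $\mu$-mass $\mu(K_1)$, and this mass is also expressed as a monodromy count of roots swapping between $\{j\le m\}$ and $\{j>m\}$ around $K_1$. Such a count is an integer divided by the number of sheets, which conflicts with the positive-but-fractional mass in the limit. If this counting argument resists, the fallback is Ullman's original route: the equilibrium-type characterisation of $\mu$ implies that $\C\setminus\Lambda$ is connected and that $G$ is the Green-type potential of $\Lambda$. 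A disconnected compact set whose complement is connected would then force $G$ to be harmonic across a gap between $K_1$ and $K_2$. But the roots $z_m$ and $z_{m+1}$ must exchange along any path from $K_1$ to $K_2$ in the algebraic curve $z^m(f_m(z)-\lambda)=0$, and this exchange is impossible without meeting $\Lambda$.
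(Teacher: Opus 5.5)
The paper gives no proof of this statement; it only cites Ullman. Measured on its own, your argument has a genuine gap at the step meant to produce the contradiction, although the fix is one line.

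\textbf{What is sound.} Most of your set-up works: compactness of $\Lambda$, a separating cycle $\Gamma\subset\C\setminus\Lambda$, and the Schmidt--Spitzer asymptotics of $D_n$ off $\Lambda$. Hirschman's limit measure with $\operatorname{supp}\mu=\Lambda$ gives $0<\mu(K_1)<1$. You also obtain $\lim_n N_n/n=\frac{1}{2\pi\i}\oint_\Gamma E'(\lambda)/E(\lambda)\d\lambda$ with $E:=\prod_{j>m}z_j$.

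\textbf{The gap.} You claim this limit is ``a monodromy count \ldots\ an integer divided by the number of sheets'' that conflicts with a fractional mass. This fails twice over. A number of the form $k/N$ can lie in $(0,1)$ (take $1/2$), so no contradiction would follow even if the claim were true. Moreover, nothing is ever integrated across $\Lambda$, so deforming $\Gamma$ across $K_1$ and counting root exchanges plays no role. The fallback does not close the gap either. Harmonicity of $G$ on the region between $K_1$ and $K_2$ is automatic, since $G$ is harmonic on all of $\C\setminus\Lambda$, and it contradicts nothing. The claims that $G$ is a Green-type potential of $\Lambda$, and that $z_m,z_{m+1}$ must exchange along every path from $K_1$ to $K_2$, are asserted without proof.

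\textbf{The missing observation.} You even remark that $E$ is single valued off $\Lambda$, but you do not use it. $E$ is a single-valued, holomorphic, zero-free function on all of $\C\setminus\Lambda$, whatever the monodromy of the individual roots. For $\lambda_0\notin\Lambda$ one has $|z_m(\lambda_0)|<|z_{m+1}(\lambda_0)|$. So a fixed circle $|z|=\rho$ separates the $m$ small roots of $P_\lambda(z):=z^m(f_m(z)-\lambda)$ from the $m$ large ones for all $\lambda$ near $\lambda_0$. Cauchy integrals of $z^kP_\lambda'/P_\lambda$ over $|z|=\rho$, together with Newton's identities, then show that the symmetric functions of the large roots, in particular $E$, depend holomorphically on $\lambda$. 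Also $E\neq0$, because $P_\lambda$ has constant term $a_m\neq0$. Hence $E\circ\Gamma$ is a closed curve in $\C\setminus\{0\}$, and
\[
\mu(K_1)=\lim_{n\to\infty}\frac{N_n}{n}=\frac{1}{2\pi\i}\oint_\Gamma\frac{E'(\lambda)}{E(\lambda)}\d\lambda\in\Z,
\]
which is incompatible with $0<\mu(K_1)<1$. Equivalently, the flux of $G=\log|a_{-m}E|$ through any cycle in $\C\setminus\Lambda$ is a winding number, so every such cycle encloses integer $\mu$-mass. With this line your argument is a complete proof, modulo the cited inputs.

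\textbf{Smaller repairs.}
\begin{enumerate}
\item The boundary of a union of grid squares is in general a cycle of several closed polygons, not a single Jordan curve. That suffices for the argument principle.
\item That boundary need not stay $\delta/4$ away from $\Lambda$. Use $\operatorname{dist}(\Gamma,\Lambda)>0$ instead.
\item Your claim that $N_n$ and $n-N_n$ tend to infinity is not justified by what you wrote. It is not needed once $0<\mu(K_1)<1$.
\end{enumerate}
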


In particular as $\sigma_{\mathrm{open}}\bigl(\mathbf{T}(f_m)\bigr)$ consists of a finite union of analytic arcs, it directly follows from Theorem \ref{thm: connectedness of open limit} that $\sigma_{\mathrm{open}}\bigl(\mathbf{T}(f_m)\bigr)$ is path-connected.

\subsection{Criteria for real asymptotic  spectra of non-Hermitian Toeplitz matrices.}\label{Sec: open limit real spectra}
An important class of non-Hermitian Toeplitz matrices includes those with real spectra, often termed pseudo-Hermitian Toeplitz matrices. In this section, our focus will be on examining conditions under which the open limit, that is, finite matrices approaching infinite size, exhibit real spectra. 

\begin{definition}
    The symbol function $f_m$ is \emph{collapsed at $p\in \C$} if $p = f_m(re^{\i\alpha_1}) = f_m(re^{\i\alpha_2})$ for some $ r>0$ and $\alpha_1\neq\alpha_2$.
\end{definition}
Consider the following polynomial
\begin{equation}\label{eq: polynomial symbol}
    z^m\bigl(f_m(z) - \lambda \bigr) = \prod_{i = 1}^{2m}\bigl(z - z_i(\lambda) \bigr)
\end{equation}
and suppose the roots are sorted in ascending magnitude,
\begin{equation}
    |z_1(\lambda)| \leq \dots \leq |z_m(\lambda)| \leq |z_{m+1}(\lambda)| \leq \dots \leq |z_{2m}(\lambda)|.
\end{equation}
Let us define the following set,
\begin{equation}\label{set: non confluent CBS}
    \mathbf{Z}(f_m) := \bigl\{ z_m(\lambda), z_{m+1}(\lambda) \in \C~:~ |z_m(\lambda)| = |z_{m+1}(\lambda)|, \text{ for some } \lambda \in \R \bigr\}.
\end{equation}
It is easy to see that $\mathbf{Z}(f_m) \subseteq f_m^{-1}(\R)$, where $f_m^{-1}(\R)$ denotes the set of preimages or roots of $f_m(z)-\lambda= 0$ for $\lambda\in \R$. Moreover, it follows from the definition of the set in \eqref{set: non confluent CBS} that,
\begin{equation}
    \bigl\{ \lambda \in \R ~:~ |z_{m}(\lambda)| = |z_{m+1}(\lambda)| \bigr\} = \big\{ f_m(z) ~:~z \in \mathbf{Z}(f_m) \big \}.
\end{equation}

A priori, it is possible for some of the roots in \eqref{eq: polynomial symbol} to become confluent. Such roots are characterised by the fact that they are also roots of $f_m'(z)$. Since $f'_m$ has at most $2m$ roots, there are at most $2m$ confluent roots independent of $\lambda$. We present the following standard assumption \cite{ComputeBandedToeplitz}.

\begin{assumption}\label{Confluence Assumption}
    Let $f_m$ be as in \eqref{eq: def symbol function banded}. We assume that for every $\lambda \in \R$ with $|z_m(\lambda)| = |z_{m+1}(\lambda)|$ the remaining moduli are strictly separated,
    \begin{equation}\label{eq: separation}
        |z_{m-1}(\lambda)| < |z_m(\lambda)| = |z_{m+1}(\lambda)| < |z_{m+2}(\lambda)| .
    \end{equation}
\end{assumption}
The structure of the roots yields the following lemma.
 
\begin{lemma}\label{lem: separation implies non confluence}
    Suppose Assumption \ref{Confluence Assumption} holds. Then $f_m'(z) \neq 0$ for every $z \in \mathbf{Z}(f_m)\setminus\R$, and $f''_m(z) \neq 0$ for every $z \in \mathbf{Z}(f_m)\cap\R$ such that $f_m'(z) = 0$.
\end{lemma}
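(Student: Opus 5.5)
We argue directly from the root structure of the polynomial $p_\lambda(z) := z^m\bigl(f_m(z)-\lambda\bigr)$ for real $\lambda$. Two facts drive everything. First, since the coefficients $a_k$ are real and $\lambda\in\R$, $p_\lambda$ has real coefficients, so its roots are closed under complex conjugation. Second, a root $z_0$ of $p_\lambda$ with $z_0 \neq 0$ has multiplicity equal to the order of vanishing of $f_m(z)-\lambda$ at $z_0$. Thus $f_m'(z_0)=0$ means $z_0$ is a root of multiplicity at least $2$, and $f_m'(z_0)=f_m''(z_0)=0$ means multiplicity at least $3$.

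For the first claim, take $z \in \mathbf{Z}(f_m)\setminus\R$ and let $\lambda\in\R$ be such that $z\in\{z_m(\lambda),z_{m+1}(\lambda)\}$ with $|z_m(\lambda)|=|z_{m+1}(\lambda)|$. Note $\lambda = f_m(z)$ is real.
\begin{itemize}
\item Since $z\notin\R$, its conjugate $\bar z\neq z$ is also a root of $p_\lambda$, with the same modulus $|z|$.
\item Suppose, for contradiction, that $f_m'(z)=0$. Then $z$ is a root of multiplicity at least $2$, and by conjugation so is $\bar z$.
\item Counted with multiplicity, this gives at least four roots of modulus $|z|=|z_m(\lambda)|$.
\item By \eqref{eq: separation}, exactly two roots (namely $z_m,z_{m+1}$) have this modulus, since $|z_{m-1}|<|z_m|$ and $|z_{m+1}|<|z_{m+2}|$. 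This is a contradiction.
\end{itemize}
The conjugate pair already accounts for both roots of modulus $|z_m(\lambda)|$, so in fact $\{z_m(\lambda),z_{m+1}(\lambda)\}=\{z,\bar z\}$, each simple.

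For the second claim, take $z\in\mathbf{Z}(f_m)\cap\R$ with $f_m'(z)=0$, and suppose $f_m''(z)=0$. Then $z$ is a root of $p_\lambda$ of multiplicity at least $3$. Hence at least three roots, counted with multiplicity, have modulus $|z|$. This again contradicts the separation \eqref{eq: separation}, which allows exactly two.

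Two points need care, and I do not expect a genuine obstacle.
\begin{itemize}
\item \textbf{Confluent roots in the ordering.} The ordering $z_1,\dots,z_{2m}$ is taken with multiplicity, so a double root occupies two consecutive slots. The separation inequality therefore really counts roots with multiplicity, which is what both arguments use.
\item \textbf{The root $z=0$.} Multiplicities are read off from $f_m-\lambda$, which is legitimate only away from $z=0$. This is fine because $a_{\pm m}\neq0$ forces $p_\lambda(0)=a_m\neq 0$, so all roots are nonzero.
\end{itemize}
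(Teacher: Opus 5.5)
Your proof is correct and follows essentially the same route as the paper: the separation \eqref{eq: separation} leaves exactly two roots (counted with multiplicity) of modulus $|z_m(\lambda)|$, a double non-real root together with its conjugate would give four such roots, and a triple real root would give three. Your remarks on counting multiplicity in the ordering and on $p_\lambda(0)=a_m\neq 0$ fill in details that the paper leaves implicit.
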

 
\begin{proof}
    Let $z \in \mathbf{Z}(f_m)$ and put $\lambda := f_m(z) \in \R$, so that $|z| = |z_m(\lambda)| = |z_{m+1}(\lambda)|$. Roots of equal modulus occupy consecutive positions in the ordering, and the block of roots of modulus $|z|$ contains the positions $m$ and $m+1$; by \eqref{eq: separation} it contains no further position, hence it consists of exactly two roots.
 
    If $z \notin \R$ and $f_m'(z) = 0$, then $z$ is a root of $f_m - \lambda$ of multiplicity at least two and, since $f_m$ has real coefficients and $\lambda \in \R$, so is $\overline{z} \neq z$. The block would then contain at least four roots, a contradiction. If $z \in \R$ and $f_m'(z) = f''_m(z) = 0$, then $z$ is a root of multiplicity at least three and the block would contain at least three roots, again a contradiction.
\end{proof}

Moreover, the set $\mathbf{Z}(f_m)$ together with Assumption \ref{Confluence Assumption} avoids confluent roots and, at first glance, might seem difficult to visualise, but has an intuitive visual representation when plotting the complex band structure. Namely, if the main complex band does not touch or intersect other complex bands, see Figure \ref{Fig: Lambda spanned by polar curve} (A), which is a standard condition avoiding exceptional points \cite{ComputeBandedToeplitz}. 
We have another important observation on the structure of the set $\mathbf{Z}(f_m)$ under Assumption \ref{Confluence Assumption}.
\begin{remark}\label{rem: real and imag confluent roots}
    Suppose that Assumption \ref{Confluence Assumption} is satisfied, then the set $\mathbf{Z}(f_m)$ cannot contain complex confluent roots, because the roots of $f_m(z)-\lambda$ appear in complex-conjugate pairs. However, the set $\mathbf{Z}(f_m)$ may contain real-valued confluent roots, as long as $f''_m(z) \neq 0$ for $z\in \R$.
\end{remark}
In the sequel, we will introduce some more structure on the geometry of the set $\mathbf{Z}(f_m)$.

\begin{definition}\label{def: polar curve}
    We define a $C^1$ \emph{polar curve} as the set parametrised by polar coordinates such that
    \begin{equation}\label{eq: def eq of polar curve}
        p(e^{\i\theta}) = r(\theta) e^{\i \theta}, \quad \theta \in [-\pi, \pi],
    \end{equation}
    for some continuously differentiable function $r: [-\pi, \pi] \to (0, \infty)$ describing the radius as function of the angle $\theta$, such that $r(-\theta) = r(\theta)$ and $r'(\pi) = r'(-\pi)$. For ease of notation, we will often denote the polar curve as $p(\mathbb{T})$, where $\mathbb{T}$ is the one-dimensional torus.
\end{definition}
The next result relates the geometry of the set $\mathbf{Z}(f_m)$ to the analytical properties of the complex band structure.

\begin{lemma}\label{lem: monotone alpha}
    If the set $\mathbf{Z}(f_m)$ is spanned by a polar curve and Assumption \ref{Confluence Assumption} holds, then $\lambda(\alpha)$ is strictly monotone in $\alpha$ on the sets $(-\pi, 0)$ and $(0, \pi)$.
\end{lemma}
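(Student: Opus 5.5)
We read $\lambda(\alpha)$ as $\lambda(\alpha) := f_m\bigl(p(e^{\i\alpha})\bigr) = f_m\bigl(r(\alpha)e^{\i\alpha}\bigr)$, where $p(\mathbb{T})$ is the polar curve spanning $\mathbf{Z}(f_m)$. By definition of $\mathbf{Z}(f_m)$, the map $\alpha\mapsto\lambda(\alpha)$ is real-valued and continuous, indeed $C^1$, on $[-\pi,\pi]$. Because $f_m$ has real coefficients and $r(-\alpha)=r(\alpha)$, we have $\lambda(-\alpha)=\overline{\lambda(\alpha)}=\lambda(\alpha)$. It therefore suffices to prove strict monotonicity on $(0,\pi)$. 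The plan is to show that $\lambda'(\alpha)\neq 0$ for every $\alpha\in(0,\pi)$; continuity of $\lambda'$ then forces a constant sign, hence strict monotonicity.

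\textbf{Step 1 (derivative formula).} By the chain rule,
\begin{equation*}
\lambda'(\alpha) = f_m'\bigl(p(e^{\i\alpha})\bigr)\,\bigl(r'(\alpha)+\i r(\alpha)\bigr)e^{\i\alpha}.
\end{equation*}
For $\alpha\in(0,\pi)$ the point $z=p(e^{\i\alpha})$ is non-real and lies in $\mathbf{Z}(f_m)\setminus\R$. Lemma \ref{lem: separation implies non confluence} then gives $f_m'(z)\neq 0$. Since $r(\alpha)>0$, the factor $r'(\alpha)+\i r(\alpha)$ is nonzero, and so is $e^{\i\alpha}$. Hence $\lambda'(\alpha)\neq 0$ on $(0,\pi)$. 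As $\lambda'$ is real and continuous there, it has a fixed sign, and $\lambda$ is strictly monotone on $(0,\pi)$. The same argument applies on $(-\pi,0)$, or one can use the symmetry $\lambda(-\alpha)=\lambda(\alpha)$, which reverses the direction of monotonicity.

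\textbf{Main obstacle.} The delicate point is justifying that $\lambda$ really is the real-valued $C^1$ function described above. Two things must be checked. First, every point of the polar curve must belong to $\mathbf{Z}(f_m)$, so that it maps to a real $\lambda$ and Lemma \ref{lem: separation implies non confluence} applies; this is what ``spanned by'' should mean. Second, $r\in C^1$ must make $\alpha\mapsto p(e^{\i\alpha})$ differentiable. If instead $\lambda(\alpha)$ is meant as the parametrisation of $\mathbf{Z}(f_m)$ by $\lambda$, with $z_m(\lambda)=r e^{\i\alpha}$, then I would argue via the implicit function theorem. Non-confluence (Lemma \ref{lem: separation implies non confluence}) makes $z_m(\lambda)$ locally analytic in $\lambda$ near every real $\lambda$ in the open limit. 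By Assumption \ref{Confluence Assumption}, the root of modulus $|z_m|$ on the curve is unique up to conjugation, so $\alpha\mapsto\lambda$ is locally injective. Together with continuity, this again yields strict monotonicity on each open half-interval.

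The endpoints $\alpha=0,\pm\pi$ correspond to real points of the curve. There $f_m'$ may vanish, but only with $f_m''\neq0$ by Lemma \ref{lem: separation implies non confluence}. These points are excluded from the open intervals, so they do not obstruct the claim.
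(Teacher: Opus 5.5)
Your proof is correct and follows the paper's argument. The paper differentiates $\lambda(\alpha)=f_m(e^{\i(\alpha+\i\beta(\alpha))})$; with $r=e^{-\beta}$, its nonvanishing factor $\i e^{\i(\alpha+\i\beta)}(1+\i\beta'(\alpha))$ is exactly your $(r'(\alpha)+\i r(\alpha))e^{\i\alpha}$, and both arguments then use that $f_m'$ vanishes on $\mathbf{Z}(f_m)$ only at real points, i.e.\ $\alpha\in\{0,\pm\pi\}$. Your explicit step that the real, continuous $\lambda'$ must keep a constant sign fills in a detail the paper leaves implicit.
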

\begin{proof}
    We start by noting that $\lambda(\alpha) = f_m(e^{\i(\alpha + \i\beta(\alpha))})$. Therefore, it follows that
    \begin{equation}
        \frac{\d}{\d\alpha}\lambda(\alpha) = \frac{\d}{\d\alpha}f_m(e^{\i(\alpha + \i\beta(\alpha))}) = f'_m(e^{\i(\alpha + \i\beta)})\Big(e^{\i(\alpha + \i\beta)}\i\big( \underbrace{1+\i \beta'(\alpha)}_{\neq0}\big)\Big).
    \end{equation}
    By Assumption \ref{Confluence Assumption} for any element $z\in \mathbf{Z}(f)$ it follows $f'_m(z) = 0$ only if $z \in \R$, correspondingly $\alpha \in \{0, \pm\pi\}$. As a consequence, $\lambda(\alpha)$ is strictly monotone in the intervals $(-\pi, 0)$ and $(0, \pi)$. 
\end{proof}

By the inverse function theorem, restricting to either $(-\pi, 0)$ or $(0, \pi)$, the function $\alpha(\lambda)$ is strictly monotone.
As a consequence of Lemma \ref{lem: monotone alpha}, in the case where $\mathbf{Z}(f_m)$ is spanned by a polar curve the set $\mathbf{Z}(f_m)$ admits a parametrisation by the complex band structure, that is, $p(e^{\i\alpha}) = e^{\i(\alpha + \i\beta(\alpha))}$ and it follows that,
\begin{equation}\label{eq: polar curve bijection}
    \mathbf{Z}(f_m) = \bigcup_{\alpha \in[-\pi, \pi]} p(e^{\i\alpha}).
\end{equation}

The motivation behind this polar curve $p(\mathbb{T})$ is the following.
For tridiagonal Toeplitz operators, as demonstrated in \cite{ammari2024spectra, debruijn2025complexbandstructurelocalisation}, a unique $\tilde{r}$ exists such that $\bigcap_{r>0} \sigma\bigl(\mathbf{T}\bigl(f(r\mathbb{T})\bigr)\bigr) = \sigma\bigl(\mathbf{T}\bigl(f(\tilde{r}\mathbb{T})\bigr)\bigr)$. This arises because in the bulk, the complex Floquet parameter is fixed to the rate of non-reciprocity $\tilde{r} = e^{-\beta}$ for constant $\beta$. However, for general $m$-banded Toeplitz operators, this condition does not hold, that is, there is no single $r$  for which the symbol function $f(r\mathbb{T})$ will be collapsed onto $\sigma_{\mathrm{open}}$. Subsequently, it is not enough to evaluate the symbol function on the scaled torus and we will allow more general homotopic deformations of the unit circle.
Along the polar curve $p(\mathbb{T})$, we have the following result.
\begin{lemma}\label{lem: Collapse}
    For any $z \in p(\mathbb{T})$, such that $z \in \C\setminus\R$, the symbol $f_m(z)$ is collapsed.
\end{lemma}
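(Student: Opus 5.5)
Take $z \in p(\mathbb{T})\setminus\R$. By \eqref{eq: polar curve bijection} (valid under the standing hypotheses of Lemma \ref{lem: monotone alpha}, namely that $\mathbf{Z}(f_m)$ is spanned by the polar curve and Assumption \ref{Confluence Assumption} holds), we have $z \in \mathbf{Z}(f_m)$. Hence there is a $\lambda \in \R$ with $z \in \{z_m(\lambda), z_{m+1}(\lambda)\}$ and $|z_m(\lambda)| = |z_{m+1}(\lambda)|$; in particular $f_m(z) = \lambda \in \R$. The claim is that the second root of modulus $|z|$ is $\overline{z}$. Writing $z = re^{\i\alpha_1}$, this gives $f_m(re^{\i\alpha_1}) = f_m(re^{-\i\alpha_1}) = \lambda$ with $\alpha_1 \neq -\alpha_1 \pmod{2\pi}$, which is exactly the definition of collapse at $p=\lambda$.

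The key steps are as follows. First, since the coefficients $a_k$ are real and $\lambda \in \R$, the polynomial $z^m(f_m(z)-\lambda)$ has real coefficients, so $\overline{z}$ is also a root. Because $z\notin\R$, we have $\overline{z}\neq z$, and clearly $|\overline{z}| = |z|$. Second, by Assumption \ref{Confluence Assumption}, exactly as in the proof of Lemma \ref{lem: separation implies non confluence}, the block of roots with modulus $|z|$ consists of exactly the two positions $m$ and $m+1$. Since $z$ occupies one of these positions and $\overline{z}$ is a distinct root of the same modulus, $\overline{z}$ must occupy the other, so $\{z_m(\lambda), z_{m+1}(\lambda)\} = \{z, \overline{z}\}$. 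Third, because $z\notin\R$, the argument satisfies $\alpha_1 \notin \{0,\pm\pi\}$, so $\alpha_1$ and $\alpha_2 := -\alpha_1$ are distinct angles. This completes the collapse.

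The main point requiring care is the membership $z\in\mathbf{Z}(f_m)$, that is, the identification of the polar curve with $\mathbf{Z}(f_m)$. This identification rests on \eqref{eq: polar curve bijection}, so I would state explicitly that the lemma is read under those hypotheses. Once that membership is granted, the argument is just conjugate symmetry combined with the modulus separation. The symmetry $r(-\theta)=r(\theta)$ in Definition \ref{def: polar curve} is consistent with this conclusion, since it means $\overline{z}$ also lies on $p(\mathbb{T})$, but it is not needed for the argument.
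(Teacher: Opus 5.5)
Your proof is correct and follows the paper's argument: real coefficients give $f_m(\overline{z}) = f_m(z) = \lambda \in \R$ with $|\overline{z}| = |z|$, and $\arg z \neq \arg \overline{z}$ because $z \notin \R$. Your second step, which uses Assumption \ref{Confluence Assumption} to identify $\overline{z}$ as the other middle root, is harmless but not needed, since collapse only requires two points of equal modulus and distinct argument with the same image.
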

\begin{proof}
Since the complex band structure is defined for $\lambda\in\R$ and since the coefficients of the Laurent polynomial $f_m$ are real, for any root $z$ of $f_m(z)-\lambda = 0$ it also holds that $f_m(\overline{z})-\lambda = 0$.
Since by assumption $z\in \C\setminus\R$ it must hold that  $\arg(z)\neq\arg(\overline{z})$, which completes the proof.
\end{proof}

The following result is one of our principal theorems, providing a sufficient condition that guarantees the reality of the spectrum of the open limit.

\begin{theorem}\label{Thm: real sectra criterion}
    Suppose that $\mathbf{Z}(f_m)$ is spanned by a polar curve $p(\mathbb{T})$ that is $C^1$ defined in \eqref{eq: polar curve bijection} and suppose that Assumption \ref{Confluence Assumption} is met, then it holds that
    \begin{equation}\label{eq: spectral limit}
        \lim_{n\to\infty} \sigma\bigl(\mathbf{T}_n(f_m)\bigr) = \bigcup_{\alpha \in[-\pi, \pi]} f_m\bigl(e^{\i(\alpha + \i \beta(\alpha))}\bigr) \subset\R.
    \end{equation}
\end{theorem}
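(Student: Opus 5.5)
We prove the two inclusions between $\sigma_{\mathrm{open}} := \lim_{n\to\infty}\sigma(\mathbf{T}_n(f_m))$ and $\Lambda := \bigcup_{\alpha} f_m\bigl(e^{\i(\alpha+\i\beta(\alpha))}\bigr) = f_m(p(\mathbb{T}))$, using the characterisation \eqref{eq: modulus definition} from Theorem \ref{thm: schmidt spitzer magnitude}. Reality of $\Lambda$ is immediate, since $p(\mathbb{T}) = \mathbf{Z}(f_m) \subseteq f_m^{-1}(\R)$ by \eqref{eq: polar curve bijection}. The inclusion $\Lambda \subseteq \sigma_{\mathrm{open}}$ is also direct. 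For $z \in \mathbf{Z}(f_m)$, the definition \eqref{set: non confluent CBS} provides a real $\lambda = f_m(z)$ with $|z_m(\lambda)| = |z_{m+1}(\lambda)|$, so $\lambda \in \sigma_{\mathrm{open}}$ by Theorem \ref{thm: schmidt spitzer magnitude}. Hence $\Lambda = \sigma_{\mathrm{open}}\cap\R$, and the whole content of the theorem is the reverse statement $\sigma_{\mathrm{open}}\subseteq\R$.

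To show $\sigma_{\mathrm{open}}\subseteq\R$, I will use the structure of $\Lambda$ together with the local analytic-arc structure of $\sigma_{\mathrm{open}}$ recalled after Theorem \ref{thm: schmidt spitzer magnitude}.

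\textbf{Step 1: $\Lambda$ is a compact interval.} The map $\alpha \mapsto \lambda(\alpha)$ is continuous on $[-\pi,\pi]$ because $p$ is $C^1$. By Lemma \ref{lem: monotone alpha} it is strictly monotone on $(0,\pi)$. By conjugate symmetry, $r(-\theta) = r(\theta)$ gives $\lambda(-\alpha) = \lambda(\alpha)$. So $\Lambda = [\lambda(0),\lambda(\pi)]$ up to orientation, and the endpoints are the images of the real points $p(1)$ and $p(-1)$.

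\textbf{Step 2: at an interior point, $\sigma_{\mathrm{open}}$ is locally just the real interval.} Take $\lambda_0 = \lambda(\alpha_0)$ with $\alpha_0 \in (0,\pi)$. Assumption \ref{Confluence Assumption} says the middle pair $z_m(\lambda_0) = p(e^{\i\alpha_0})$ and $z_{m+1}(\lambda_0) = \overline{z_m(\lambda_0)}$ is strictly separated in modulus from all other roots. Lemma \ref{lem: separation implies non confluence} says these roots are simple. By the implicit function theorem, in a complex neighbourhood $U$ of $\lambda_0$ the two middle roots are analytic functions $\zeta_\pm(\lambda)$, and they stay the $m$-th and $(m+1)$-th roots in modulus order. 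So within $U$,
\[
\sigma_{\mathrm{open}}\cap U = \{\lambda\in U : \log|\zeta_+(\lambda)| - \log|\zeta_-(\lambda)| = 0\}.
\]
The function $h := \log|\zeta_+| - \log|\zeta_-|$ is harmonic. It vanishes on $\R\cap U$, because there $\zeta_- = \overline{\zeta_+}$. Its gradient at $\lambda_0$ is nonzero: writing $\zeta_\pm = e^{\i(\pm\alpha + \i\beta)}$, the derivative of $\log\zeta_+ - \log\zeta_-$ in $\lambda$ equals $2\i\, d\alpha/d\lambda \neq 0$ by the inverse function theorem remark after Lemma \ref{lem: monotone alpha}. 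The zero set of a harmonic function with nonvanishing gradient is locally a single analytic arc. Since it already contains $\R\cap U$, we get $\sigma_{\mathrm{open}}\cap U = \R\cap U$.

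\textbf{Step 3: the endpoints.} Here the middle roots are real and may coincide: at $\lambda(0)$ the roots are confluent with $f_m''\neq 0$ by Lemma \ref{lem: separation implies non confluence}, or the two middle roots are real of equal modulus. I plan to use the local parametrisation $\zeta \approx z_0 \pm c\sqrt{\lambda-\lambda(0)}$ in the confluent case. For $\lambda$ near $\lambda(0)$ the two roots then have equal modulus only along the direction where $\sqrt{\lambda-\lambda(0)}$ is purely imaginary, which is the real half-line pointing into $\Lambda$. This shows no non-real branch of $\sigma_{\mathrm{open}}$ emanates from the endpoints. The non-confluent real case $z_m = -z_{m+1}$ is handled exactly as in Step 2.

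\textbf{Step 4: conclusion by connectedness.} Steps 2 and 3 show that $\Lambda$ is relatively open in $\sigma_{\mathrm{open}}$: every point of $\Lambda$ has a neighbourhood $U$ with $\sigma_{\mathrm{open}}\cap U\subseteq\Lambda$. Also, $\Lambda$ is closed and nonempty. By Theorem \ref{thm: connectedness of open limit}, $\sigma_{\mathrm{open}}$ is connected, so $\sigma_{\mathrm{open}} = \Lambda \subset \R$.

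I expect the main obstacle to be the local analysis at the endpoints in Step 3. That is where confluence is permitted, and where one must rule out extra analytic arcs of $\sigma_{\mathrm{open}}$ branching off into $\C\setminus\R$. I would first try the Puiseux expansion together with the strict separation from $|z_{m-1}|$ and $|z_{m+2}|$.
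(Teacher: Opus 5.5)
Your architecture matches the paper's: a local analysis at interior points of $\Lambda$ showing the equal-modulus set is locally $\R$, a separate analysis at the two endpoints where the middle roots coalesce, and Ullman's connectedness to globalise. Steps 1, 2 and 4 are sound. Your harmonic $h$, with $\nabla h(\lambda_0)\neq 0$ coming from $d\alpha/d\lambda\neq 0$ (i.e.\ from $r\in C^1$), is a tidier version of the paper's expansion of $F(\lambda)=|z(\lambda)|^2-|z(\overline{\lambda})|^2$. Your clopen argument is also cleaner than the paper's path argument.

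The gap is Step 3, which is exactly where a non-real branch could appear. The approximation $z_0\pm c\sqrt{\delta}$, with $\delta=\lambda-\lambda(0)$, only gives $\mathrm{Re}(z_0 c\sqrt{\delta})=O(|\delta|)$ on the equal-modulus set. That places the set in a cusp around a real half-line. It does not show the set is a single curve, nor does it exclude a non-real arc of $\sigma_{\mathrm{open}}$ leaving $\lambda(0)$ tangentially to $\R$. You need an exact reformulation, and the paper's goes as follows:
\begin{itemize}
\item Write the two roots near $z_0$ as $m(\delta)\pm\eta(\delta)$. Here $m$ and $\eta^2$ are holomorphic in $\delta$, being symmetric functions of the two roots in a small disc around $z_0$.
\item Since $|m+\eta|^2-|m-\eta|^2=4\,\mathrm{Re}(\overline{m}\eta)$, equal modulus holds iff $\eta/m\in\i\R$, iff $Q(\delta):=\eta^2/m^2\in(-\infty,0]$.
\item $Q$ is holomorphic and real on $\R$, with $Q'(0)=2/(z_0^2 f_m''(z_0))\neq 0$ by Lemma~\ref{lem: separation implies non confluence}.
\item So $Q(\delta_0)\in\R$ gives $Q(\delta_0)=\overline{Q(\delta_0)}=Q(\overline{\delta_0})$, and local injectivity forces $\delta_0\in\R$.
\end{itemize}
Equivalently, you can run your own Step 2 device in the uniformising variable $\delta=s^2$. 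There $\mathrm{Re}\,(\eta(s)/m(s^2))$ is harmonic with nonzero gradient at $s=0$, so its zero set is a single arc, symmetric under $s\mapsto -s$. Its image under $s\mapsto s^2$ is then a single half-arc, hence the real half-line it contains. Note also that the correct half-line condition is that $c\sqrt{\delta}$, not $\sqrt{\delta}$, be purely imaginary; which side this is depends on the sign of $f_m''(z_0)$.

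Separately, your extra endpoint case is mishandled. If the middle roots at $\lambda(0)$ were $r_0$ and $-r_0$, Step 2 would not apply, because it used $\zeta_-=\overline{\zeta_+}$ on $\R$. For two real roots $h$ need not vanish on $\R$, and its zero set would typically cross $\R$ perpendicularly, i.e.\ a non-real branch. The case is vacuous, though, and should be excluded rather than treated:
\begin{itemize}
\item A non-real partner of $r(0)$ is ruled out by separation, since its conjugate would be a third root of the same modulus.
\item If the partner were $-r(0)$, then $-r(0)\in\mathbf{Z}(f_m)=p(\mathbb{T})$ forces $-r(0)=p(-1)$. Hence $\lambda(\pi)=\lambda(0)$, contradicting the strict monotonicity of $\lambda$ on $[0,\pi]$ from your Step 1.
\end{itemize}
So both endpoints are confluent with $f_m''(z_0)\neq 0$, which is precisely the situation the paper's endpoint argument treats.
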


\begin{proof}
    We start by demonstrating that $\sigma_{\mathrm{open}}\bigl(\mathbf{T}(f)\bigr) \subseteq \R$.
    Suppose for contradiction that there exists a point $p \in \sigma_{\mathrm{open}}\bigl(\mathbf{T}(f)\bigr)\setminus \R$.
    Consider the interval $[\lambda_1, \lambda_2] := \sigma_{\mathrm{open}}\cap\R\neq\emptyset$, and take any $q \in (\lambda_1, \lambda_2)$.
    By Theorem \ref{thm: connectedness of open limit} we know that $\sigma_{\mathrm{open}}\bigl(\mathbf{T}(f)\bigr)$ is a connected set, so there exists a continuous path $\tau:[0,1] \to \C$ such that $\tau(0) = q$ and $\tau(1) = p$. We can choose the path such that $\tau(t) \in \sigma_{\mathrm{open}}\bigl(\mathbf{T}(f)\bigr)$, $\forall t \in [0,1]$. We demonstrate that such complex branches are inadmissible under the standing assumptions.
    
 Suppose first $q \in (\lambda_1,\lambda_2)$. By monotonicity, see Lemma~\ref{lem: monotone alpha}, the values $\alpha = 0$ and $\alpha = \pi$ occur only at $\lambda_1$ and $\lambda_2$, so the middle roots of $f_m(z) - q$ form a conjugate pair $z_0 = r_0e^{\i\alpha_0}$, $\overline{z_0}$ with $z_0 \notin \R$. By Lemma~\ref{lem: separation implies non confluence} these roots are simple and, by Assumption \ref{Confluence Assumption}, strictly separated in modulus from the remaining ones. Hence they depend analytically on $\lambda$ near $q$, see \cite[Section 2]{ComputeBandedToeplitz}, and remain the middle pair on a neighbourhood of $q$. Write $z(\lambda) = r(\lambda)e^{\i\theta(\lambda)}$ for the branch with $z(q) = z_0$. Since
    \begin{equation}\label{eq: regular curve}
        r'(\lambda)^{2} + r(\lambda)^{2}\theta'(\lambda)^{2} = \left|\frac{\d z}{\d\lambda}\right|^{2} = \frac{1}{|f_m'(z)|^{2}} \neq 0 ,
    \end{equation}
    the derivatives $r'$ and $\theta'$ cannot vanish simultaneously. Were $\theta'(q) = 0$, the polar curve would satisfy $\d r/\d\theta \to \infty$ at $\alpha_0$, contradicting the definition of a $C^1$ polar curve, see Definition~\ref{def: polar curve}, hence $\theta'(q) \neq 0$. As $f_m$ has real coefficients, the other middle root is $\overline{z(\overline{\lambda})}$, so with $\delta := \lambda - q \in \C$ the branch $z$ is holomorphic near $q$ and the roots admit the following expansion
    \begin{equation}
        |z(\lambda)|^2 = |z(q+\delta)|^{2} = |z_0|^{2} + \overline{z_0}z'(q)\,\delta + z_0\overline{z'(q)}\,\overline{\delta} + \mathcal{O}(|\delta|^{2}) = r_0^{2} + 2\mathrm{Re}\bigl(\overline{z_0}z'(q)\,\delta\bigr) + \mathcal{O}(|\delta|^{2}).
    \end{equation}
    Replacing $\delta$ by $\overline{\delta}$ gives the corresponding expansion for the second branch, that is $\overline{\lambda} = q + \overline{\delta}$. Let us introduce the following functional, 
    \begin{align}\label{eq: modulus splitting}
        F(\lambda) &:= \bigl|z(\lambda)\bigr|^{2} - \bigl|z(\overline{\lambda})\bigr|^{2}\\
        &= 2\mathrm{Re}\bigl(\overline{z_0}z'(q)\,(\delta - \overline{\delta})\bigr) + \mathcal{O}(|\delta|^{2})\\
        &= -4\,\mathrm{Im}\bigl(\overline{z_0}z'(q)\bigr)\,\mathrm{Im}(\delta) + \mathcal{O}(|\delta|^{2})\\
        &= -4\,r_0^{2}\,\theta'(q)\,\mathrm{Im}(\delta) + \mathcal{O}(|\delta|^{2}).
    \end{align}
    If the perturbation $\delta$ is non-real and small enough, it follows that $F(q) \neq 0$. Since $\bigl\{|z_m(\lambda)|, |z_{m+1}(\lambda)|\bigr\} = \bigl\{|z(\lambda)|, |z(\overline{\lambda})|\bigr\}$, we have $|z_m(\lambda)| = |z_{m+1}(\lambda)|$ iff $F(\lambda) = 0$, so by \eqref{eq: modulus definition} this zero set is $\sigma_{\mathrm{open}}$ near $q$. Hence $\sigma_{\mathrm{open}}$ is real in a neighbourhood of $q$ and since the interval $(\lambda_1, \lambda_2)$ has a compact cover, there can not be a complex branch point in the interior.

    In the case $q=\lambda_1$ or $q=\lambda_2$, the middle roots $z_m(q)$ and $z_{m+1}(q)$ coincide at some $z_0\in\R\setminus\{0\}$ with $f_m'(z_0)=0$, and $f''_m(z_0)\neq0$ by Lemma \ref{lem: separation implies non confluence}. For $\lambda=q+\delta$ we write these roots as $m(\delta)\pm\eta(\delta)$, where the midpoint $m:=\tfrac12(z_m+z_{m+1})$ and the squared half-difference $\eta^{2}=\tfrac14(z_m-z_{m+1})^{2}$, being symmetric functions of the two roots contained in a small disc around $z_0$, are holomorphic in $\delta$. Note that $m(0)=z_0$, and expanding $\eta$ for small $\delta$ yields,
    \begin{equation}\label{eq: deriv of eta}
        \eta^{2}=\bigl(2/f''_m(z_0)\bigr)\delta\bigl(1+\mathcal{O}(\delta)\bigr).
    \end{equation}
    We assume that, for some small $\delta=\delta_0$, we have $|z_m(\delta_0)|=|z_{m+1}(\delta_0)|$. At this point, we have $|m+\eta|^{2}-|m-\eta|^{2} = 0$. 
    Since by the polarisation identity it holds
    \begin{equation}
        |m+\eta|^{2}-|m-\eta|^{2}=4\,\mathrm{Re}\bigl(\overline{m}\eta\bigr),
    \end{equation}
    it follows that $\mathrm{Re}\bigl(\overline{m}\eta\bigr) = 0$, which implies that $\bigl(\overline{m}\eta\bigr)^2\in (-\infty, 0]$. We introduce the holomorphic function $Q(\delta)$
    \begin{equation}
        Q(\delta):= \frac{\eta^{2}}{m^{2}} = \frac{\eta^2 \overline{m}^2}{m^2 \overline{m}^2} = \frac{\bigl(\overline{m}\eta\bigr)^2}{|m|^4},
    \end{equation}
    so that $Q(0) = 0$ and $Q(\delta_0)\in (-\infty, 0]$. Moreover, for real $\delta$, the pair of roots $\{z_m,z_{m+1}\}$ is invariant under complex conjugation, so $m$ and $\eta^{2}$ are real, and $Q(\delta)$ maps the real axis to the real axis. Moreover, $Q(\delta_0)$ is real, so that 
     \begin{equation}\label{eq:real}
     Q(\delta_0)=\overline{Q(\delta_0)}=Q(\overline{\delta_0})
     \end{equation}
     The first derivative at $\delta = 0$ is given by \eqref{eq: deriv of eta}, that is
    \begin{equation}
    Q'(0) = \frac{\bigl(\eta^{2}\bigr)'(0)\,m(0)^{2} - \eta^{2}(0)\cdot 2m(0)m'(0)}{m(0)^{4}} = \frac{\bigl(\eta^{2}\bigr)'(0)}{m(0)^{2}} = \frac{\bigl(\eta^{2}\bigr)'(0)}{z_0^{2}} = \frac{2}{z_0^{2}f''_m(z_0)} \neq 0.
\end{equation}
    Hence $Q$ is injective near $0$, so that \eqref{eq:real} $Q$ forces $\delta_0=\overline{\delta_0}\in\R$.

    The equality in \eqref{eq: spectral limit} is obtained as follows. From \eqref{eq: modulus definition} and the fact that $\sigma_{\mathrm{open}}\bigl(\mathbf{T}(f)\bigr) \subseteq \R$, we deduce that,
    \begin{align}
        \sigma_{\mathrm{open}}\bigl(\mathbf{T}(f)\bigr)&= \bigl\{ \lambda \in \R ~:~ |z_{m}(\lambda)| = |z_{m+1}(\lambda)| \bigr\}\label{eq: proof equality}\\
        &= \big\{ f_m(z)~:~z \in \mathbf{Z}(f_m) \big\}\\
        &= \bigcup_{\alpha \in[-\pi, \pi]} f_m\big(p(e^{\i\alpha})\big)\\
        &= \sigma_{\mathrm{open}}\bigl(\mathbf{T}(f_m \circ p)\bigr),
    \end{align}
    where the last equality follows from the spectral properties of Hermitian Toeplitz matrices \cite[Theorem 5.14]{LargeTruncatedToeplitz}, and the second to last equality follows from \eqref{eq: polar curve bijection}.
\end{proof}

For a visualisation of the non-continuity of $\alpha(\lambda)$, we refer the reader to Figures \ref{Fig: Lambda not spanned by polar curve} and \ref{Fig: Continuous alpha curve}.
For continuous $\alpha(\lambda)$ the proof strategy fails when the roots become confluent, which is why we require Assumption \ref{Confluence Assumption}. An illustration of a possible breaking point is provided in Figure \ref{Fig: counterexample confluent roots}. 
 
We proceed to illustrate the case where $\mathbf{Z}(f_m)$ is spanned by a polar curve in Figure \ref{Fig: Lambda spanned by polar curve}, in which case the open spectrum is real and, contrarily, an example, where $\mathbf{Z}(f_m)$ is not spanned by a polar curve in Figure \ref{Fig: Lambda not spanned by polar curve}.

\begin{figure}[h]
    \centering
    \subfloat[][Complex band structure.]
    {\includegraphics[width=0.48\linewidth]{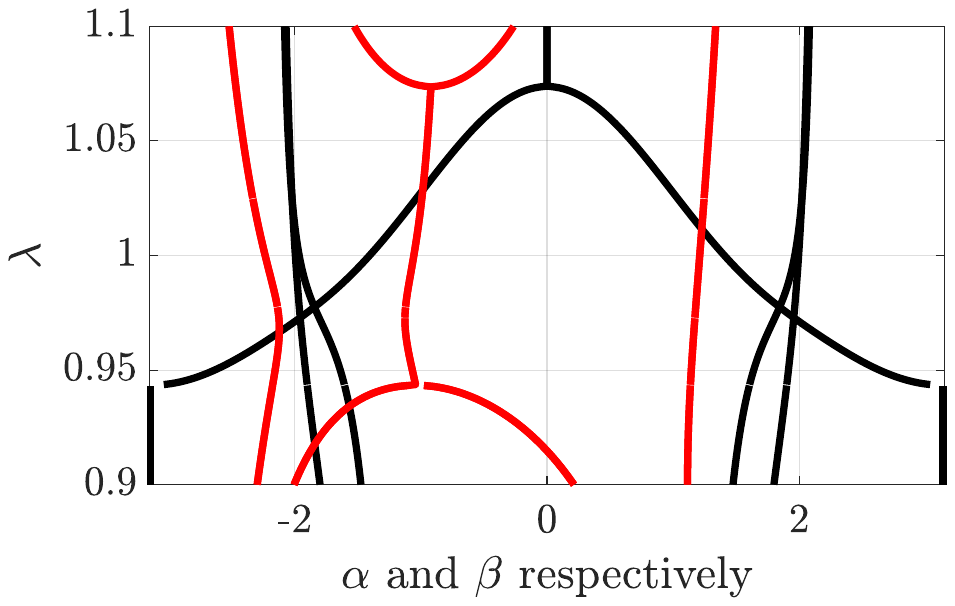}}\quad
    \subfloat[][The set $\mathbf{Z}(f_m)$ is spanned by a polar curve.]
    {\includegraphics[width=0.47\linewidth]{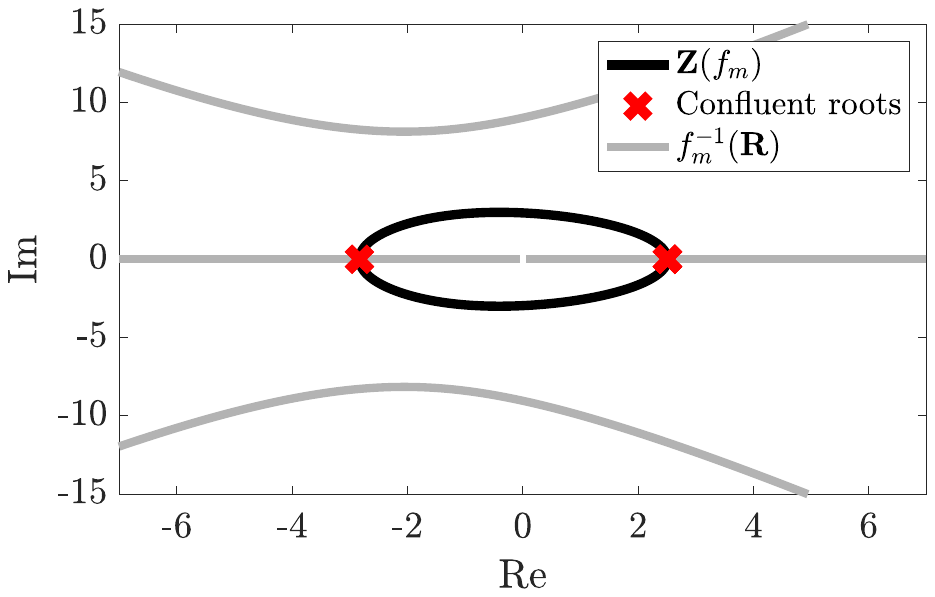}\vspace{1mm}}
    \caption{Clearly the set $\mathbf{Z}(f_m)\subseteq f_m^{-1}(\R)$ is parametrised by a polar curve. We emphasise that confluent roots are only situated on the real axis and by Remark \ref{rem: real and imag confluent roots} belong to $\mathbf{Z}(f_m)$. We see that there are no other confluent roots as the complex band in (A) is isolated. Computation performed for $m = 3$, $\kappa = 3.5$, $\rho = 6.5$.}
    \label{Fig: Lambda spanned by polar curve}
\end{figure}

\begin{figure}[h]
    \centering
    \subfloat[][Complex band structure. The gap bands overlap, which is a clear hint that $\mathbf{Z}(f_m)$ is not spanned by a polar curve.]%
    {\includegraphics[width=0.48\linewidth]{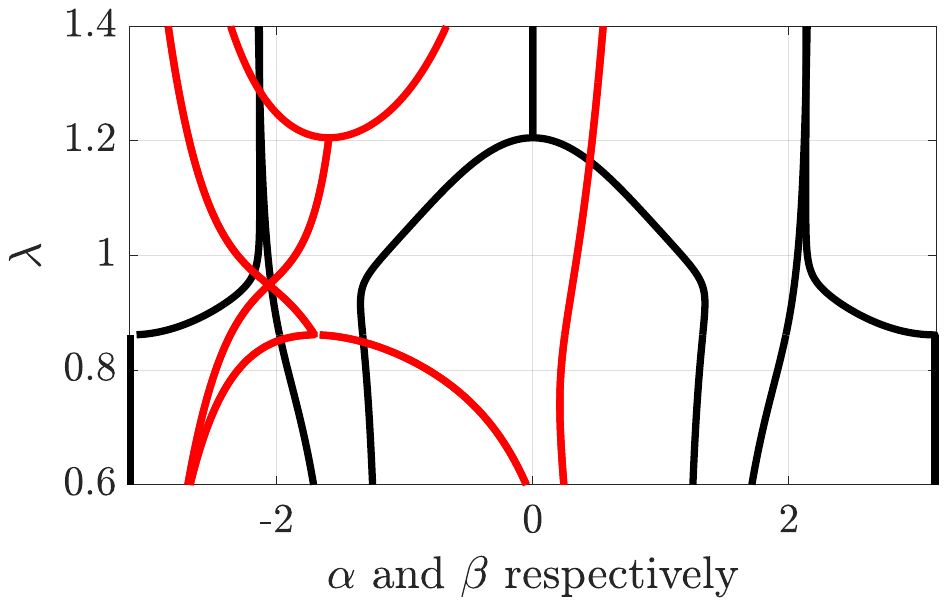}}\quad
    \subfloat[][The set $\mathbf{Z}(f_m)$ is not spanned by a polar curve.]
    {\includegraphics[width=0.47\linewidth]{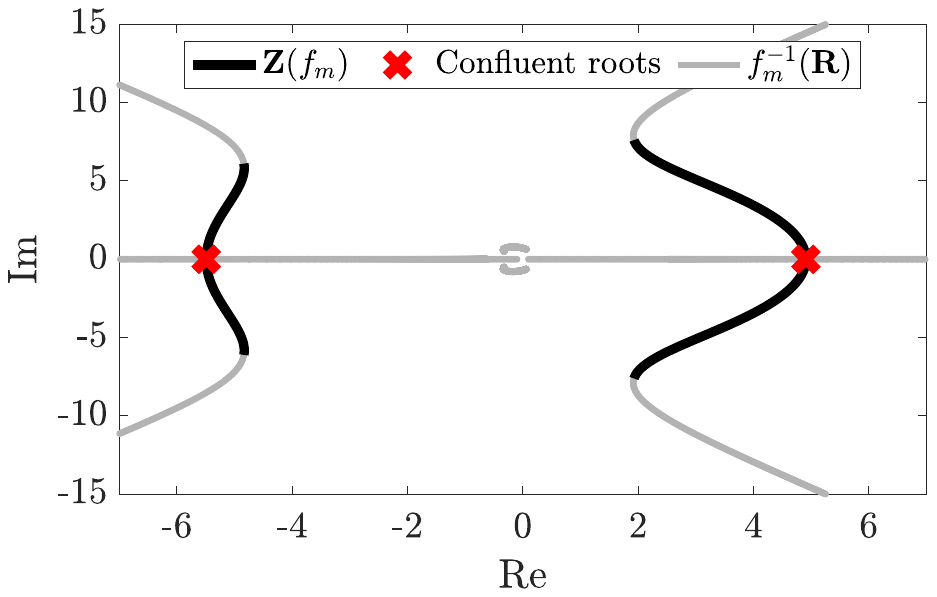}\vspace{1mm}}
    \caption{The set $\mathbf{Z}(f_m)\subseteq f_m^{-1}(\R)$ is no longer parametrised by a polar curve. Equivalently the gap bands in $(A)$ cross each other. The open limit in this setting contains complex values. Computation performed for $m = 3$, $\kappa =1$, $\rho = 6.5$.}
    \label{Fig: Lambda not spanned by polar curve}
\end{figure}

A visual criterion for when the open limit has real spectrum is if the gap band closest to the origin does not touch or overlap any other gap bands, as the corresponding real band sweeps over the first Brillouin zone.
It is worth mentioning that Theorem \ref{Thm: real sectra criterion} comes as no surprise, as the result has been conjectured and empirically observed under weaker assumptions, that is, if $f_m^{-1}(\R)$ contains a Jordan curve. This is the topic of study in \cite[Theorem 1]{RealSpectrumToeplitz}. However, the proof of this result was later discovered to be erroneous \cite{CorrectionRealAsymptoticSpectrum}, but despite the error, the result is commonly believed to be true, which is why we state it as a conjecture.

\begin{conjecture}\label{conjecture: real spectra}
    Let $f_m$ be the symbol function defined in \eqref{eq: def symbol function banded}, and let us denote by $f_m^{-1}(\R)$ the preimage of the symbol function, i.e. the roots of $f_m(z)-\lambda = 0$ for $\lambda\in \R$. The following statements are equivalent,
    \begin{enumerate}
        \item There exists a Jordan curve parametrised by the complex band structure.
        \item  $ \lim_{n\to\infty}\sigma\bigl(\mathbf{T}_n(f_m)\bigr) \subset\R.$
    \end{enumerate}
\end{conjecture}

Since only the implication $(1) \implies (2)$ was affected by the error \cite{CorrectionRealAsymptoticSpectrum}, the implication $(2) \implies (1)$ holds.

\subsection{Asymptotic spectra under homotopic deformations of the unit torus.}
As discussed in Theorem \ref{Thm: real sectra criterion}, the open limit is not affected by a specific deformation of the path along which the symbol function is evaluated. In fact, we will see that, the density of states is conserved under such a deformation.
The \emph{density of states} (DoS) of a matrix $\mathbf{A}_n$ is defined as the weak limit as $n\to \infty$ of the empirical measures,
\begin{equation}\label{eq: empirical measure}
    \mu_n(x) := \frac{1}{n}\sum_{k = 1}^n \delta_{\lambda_{k, n}}(x),
\end{equation}
where $\lambda_{1,n}, \dots, \lambda_{n,n}$ denote the eigenvalues of the matrix $\mathbf{A}_n$.
For non-Hermitian Toeplitz matrices, the density of states is characterised by the following result \cite{HirschmannSpectra}.

\begin{theorem}[Hirschman]\label{thm: Hirschmann weak limit}
    The family of empirical measures $(\mu_n)_{n \in \N}$ converges weakly to a measure that is supported on $\sigma_{\mathrm{{open}}}\bigl(\mathbf{T}(f)\bigr)$ and equals
    \begin{equation}\label{eq: measure Hirschmann}
        \d\mu = \frac{1}{2\pi}\frac{1}{g}\left|\frac{\partial g}{\partial n_1} + \frac{\partial g}{\partial n_2}\right| \d s,
    \end{equation}
    where $\d s$ is the arc length measure on $\sigma_{\mathrm{open}}\bigl(\mathbf{T}(f)\bigr)$. The function $g$ is defined as,
    \begin{equation}
        g(\lambda) := |a_m|\prod_{k=m+1}^{2m}|z_k(\lambda)|,
    \end{equation}
    and $z_i(\lambda)$ are the roots of the polynomial equation $z^m\bigl(f_m(z)-\lambda\bigr)=0$ sorted in ascending magnitude. 
\end{theorem}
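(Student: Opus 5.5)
Hirschman's theorem is a classical result from \cite{HirschmannSpectra}, so the plan is a reduction to potential theory rather than an argument from scratch. The guiding principle is that the eigenvalues of $\mathbf{T}_n(f_m)$ are the zeros of $D_n(\lambda):=\det\bigl(\mathbf{T}_n(f_m)-\lambda\bigr)$. Consequently $\frac{1}{n}\log|D_n(\lambda)|=\int\log|\lambda-x|\,\d\mu_n(x)$ is the logarithmic potential of $\mu_n$. It therefore suffices to identify the pointwise (almost everywhere, or in $L^1_{\mathrm{loc}}$) limit of $\frac1n\log|D_n|$. The measure is then recovered as $\mu=\frac{1}{2\pi}\Delta$ of that limit in the sense of distributions.

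\textbf{Step 1: Widom's formula.} For $\lambda$ not a degenerate point, the roots $z_1(\lambda),\dots,z_{2m}(\lambda)$ are distinct. Widom's formula then expresses $D_n(\lambda)$ as
\begin{equation*}
D_n(\lambda)=\sum_{M} C_M(\lambda)\, w_M(\lambda)^n, \qquad w_M = (-1)^{m}a_m\prod_{k\notin M} z_k(\lambda)^{\pm1},
\end{equation*}
where the sum runs over subsets $M$ of size $m$ of $\{1,\dots,2m\}$. The coefficients $C_M$ are nonvanishing rational functions of the roots, and the sign and normalisation conventions are adjusted to the indexing $z^{-k}$ of \eqref{eq: def symbol function banded}. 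Off the set $\{|z_m|=|z_{m+1}|\}$, a single term has strictly maximal modulus: the one whose exponent product involves exactly the $m$ largest roots. Its modulus is $g(\lambda)=|a_m|\prod_{k=m+1}^{2m}|z_k(\lambda)|$. This gives $\frac1n\log|D_n(\lambda)|\to\log g(\lambda)$ off $\sigma_{\mathrm{open}}$, which is consistent with Theorem~\ref{thm: schmidt spitzer magnitude}. On $\sigma_{\mathrm{open}}$, two terms of equal modulus $g$ compete, and the limit is still $\log g$ except on a null set.

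\textbf{Step 2: Passage to the limit of measures.} The measures $\mu_n$ are supported in a fixed compact set, since $\|\mathbf{T}_n(f_m)\|\le\sum|a_k|$, so they are tight. Any weak limit $\nu$ has potential $U^\nu$ equal to the $L^1_{\mathrm{loc}}$-limit of $\frac1n\log|D_n|$, which is $\log g$. The main obstacle sits here: proving $L^1_{\mathrm{loc}}$ convergence, not merely pointwise convergence, near $\sigma_{\mathrm{open}}$ and near the exceptional points, where $C_M$ may blow up or cancellations may occur. The first approach I would try is the following. An upper bound $\frac1n\log|D_n|\le \log g+o(1)$ holds locally uniformly, from the Widom formula and from boundedness away from degenerate points. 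A matching lower bound holds on the complement of $\sigma_{\mathrm{open}}$. The lemma on upper envelopes of subharmonic functions (Hartogs-type) then upgrades this to $L^1_{\mathrm{loc}}$ convergence. Since the limit $\log g$ is independent of the subsequence, the whole sequence $\mu_n$ converges to $\mu=\frac{1}{2\pi}\Delta\log g$.

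\textbf{Step 3: Computing the Laplacian.} Off $\sigma_{\mathrm{open}}$, $\log g$ is the modulus-log of a locally analytic function, namely the product of the $m$ largest roots, so it is harmonic there. The support of $\mu$ therefore lies in $\sigma_{\mathrm{open}}$. By \cite{doi:10.1137/23M1587804} this set is a finite union of analytic arcs plus finitely many points. Since $\log g$ is continuous, the points carry no mass. Across each arc, $\log g$ is continuous but its normal derivative jumps. Green's formula then gives the density with respect to arc length as $\frac{1}{2\pi}$ times the sum of the outward normal derivatives $\partial_{n_1}\log g+\partial_{n_2}\log g$ from the two sides. Writing $\partial\log g=g^{-1}\partial g$ yields \eqref{eq: measure Hirschmann}. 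The absolute value is justified because $\log g$ is subharmonic, so the jump has a fixed sign.
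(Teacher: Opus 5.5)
The paper gives no proof of this theorem. It is quoted from \cite{HirschmannSpectra}, so there is no in-paper argument to compare against. Your outline is the classical potential-theoretic proof: take the logarithmic potential of $\mu_n$, use Widom/Schmidt--Spitzer asymptotics of $D_n$ off $\sigma_{\mathrm{open}}$, then compute $\mu=\frac{1}{2\pi}\Delta\log g$ as a normal-derivative jump across the analytic arcs. It is sound.

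Two simplifications are worth making. First, the ``main obstacle'' in Step 2 is not one. If $\mu_{n_k}\rightharpoonup\nu$ with supports in a fixed compact set, then $U^{\mu_{n_k}}\to U^{\nu}$ in $L^1_{\mathrm{loc}}$ automatically. This is because the logarithmic kernel is uniformly locally integrable: truncate it at level $-M$, which gives a continuous kernel, and control the remainder by Fubini. You also have pointwise convergence $\frac1n\log|D_n|\to\log g$ on $\C\setminus\sigma_{\mathrm{open}}$ minus finitely many points, and $\sigma_{\mathrm{open}}$ has planar measure zero. Together these identify $U^\nu$ almost everywhere, so no Hartogs-type lemma is needed. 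That is fortunate, because your locally uniform upper bound ``from the Widom formula'' is not available near degenerate points, where the coefficients $C_M$ have poles. If you did want a Hartogs argument, the crude bound $\frac1n\log|D_n(\lambda)|\le\log\bigl(|\lambda|+\sum_k|a_k|\bigr)$ would suffice. Your discussion in Step 1 of what happens on $\sigma_{\mathrm{open}}$ itself can be dropped for the same reason.

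Second, fix a normalisation. With this paper's indexing $f_m(z)=\sum_k a_k z^{-k}$, the leading coefficient of $z^m\bigl(f_m(z)-\lambda\bigr)$ is $a_{-m}$. The dominant Widom term therefore has modulus $|a_m|/\prod_{k\le m}|z_k(\lambda)|=|a_{-m}|\prod_{k>m}|z_k(\lambda)|$. You can check this for $m=1$: the recurrence $D_n=(a_0-\lambda)D_{n-1}-a_1a_{-1}D_{n-2}$ has characteristic roots $-a_{-1}z_j$. So the identity you actually obtain is $U^\nu=\log g+\log|a_{-m}/a_m|$, not $U^\nu=\log g$. The behaviour $U^\nu(\lambda)=\log|\lambda|+o(1)$ at infinity also forces this. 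This is harmless for the theorem, since only $g^{-1}\nabla g=\nabla\log g$ enters the density. The intermediate claims in Steps 1--2 should nonetheless carry the constant.
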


We now turn to one of the central theorems of this paper, namely that the transformation of the Toeplitz matrix, by evaluating the symbol function on the polar curve $p(\mathbb{T})$, can be seen as an asymptotic similarity transform, which on average keeps the eigenvalues the same. This averaging of asymptotic spectra is precisely captured by the density of states. 

\begin{theorem}\label{thm: convergence of DoS}
     Suppose that $\mathbf{Z}(f_m)$ is spanned by a polar curve $p(\mathbb{T})$ that is $C^1$ defined in \eqref{eq: polar curve bijection} and suppose that Assumption \ref{Confluence Assumption} is met, then
    \begin{equation}
        \dos\bigl(\mathbf{T}_n(f_m) \bigr) = \dos\bigl(\mathbf{T}_n(f_m\circ p)\bigr).
    \end{equation}
\end{theorem}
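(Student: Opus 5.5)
We compare both sides explicitly. The right-hand side is a Hermitian Toeplitz matrix, so Szegő's theorem gives its density of states directly. The left-hand side is given by Hirschman's formula (Theorem \ref{thm: Hirschmann weak limit}), which we evaluate on the real set produced by Theorem \ref{Thm: real sectra criterion}.

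\textbf{Step 1: the Hermitian side.} Since $f_m\circ p$ is real-valued on $\mathbb{T}$, the first Szegő limit theorem applies. For every continuous test function $\varphi$ it gives
\begin{equation*}
\frac1n\sum_k \varphi\bigl(\lambda_{k,n}(f_m\circ p)\bigr)\to \frac{1}{2\pi}\int_{-\pi}^{\pi}\varphi\bigl(f_m(p(e^{\i\alpha}))\bigr)\d\alpha .
\end{equation*}
By Lemma \ref{lem: monotone alpha}, $\alpha\mapsto\lambda(\alpha)=f_m(p(e^{\i\alpha}))$ is strictly monotone on $(0,\pi)$ and on $(-\pi,0)$. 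By the symmetry $r(-\theta)=r(\theta)$ together with the real coefficients, we have $\lambda(-\alpha)=\lambda(\alpha)$. Changing variables therefore yields the density
\begin{equation*}
\frac{1}{\pi}\left|\frac{\d\alpha}{\d\lambda}\right| \quad \text{on } [\lambda_1,\lambda_2].
\end{equation*}

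\textbf{Step 2: the non-Hermitian side.} By Theorem \ref{Thm: real sectra criterion}, the support $\sigma_{\mathrm{open}}=[\lambda_1,\lambda_2]\subset\R$, and $\d s=\d\lambda$. We compute the normal derivatives of $\log g$ at an interior point $q$, with $n_1=\i$ and $n_2=-\i$.

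Near $q$ the roots $z_{m+2},\dots,z_{2m}$ are separated in modulus and hence holomorphic, by Assumption \ref{Confluence Assumption}. Consequently $\log\prod_{k\ge m+2}|z_k|$ is harmonic, and its two opposite normal derivatives cancel in the sum.

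The remaining factor is $|z_{m+1}(\lambda)|=\max\{|z(\lambda)|,|z(\overline\lambda)|\}$, with $z$ the holomorphic branch used in the proof of Theorem \ref{Thm: real sectra criterion}. From the expansion obtained there,
\begin{equation*}
\partial_{\mathrm{Im}\lambda}\log|z(\lambda)|=-\theta'(q)
\quad\text{and}\quad
\partial_{\mathrm{Im}\lambda}\log|z(\overline\lambda)|=+\theta'(q).
\end{equation*}
Taking the maximum on each side, each one-sided outward normal derivative of $\log|z_{m+1}|$ equals $|\theta'(q)|$. Hence
\begin{equation*}
\frac1g\left(\frac{\partial g}{\partial n_1}+\frac{\partial g}{\partial n_2}\right)=2|\theta'(q)|,
\end{equation*}
and Hirschman's density is $\frac{1}{\pi}|\theta'(\lambda)|$.

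Since $\theta(\lambda)=\alpha(\lambda)$ is exactly the inverse of the monotone map from Step 1, the two densities agree on $(\lambda_1,\lambda_2)$.

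\textbf{Step 3: the endpoints.} The endpoints $\lambda_1,\lambda_2$, where the roots are confluent, form a null set. Both limiting measures are probability measures, as total mass is preserved by the weak limit on a compact support, so neither charges these points. Therefore the measures coincide.

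\textbf{Main obstacle.} The hard part is Step 2: evaluating Hirschman's one-sided normal derivatives correctly. This requires identifying which of the conjugate branches is the larger root on each side of the real axis, and checking sign conventions so that the contributions add rather than cancel. If this becomes delicate, one can instead use the equivalent logarithmic-potential characterisation. There $\int\log|\lambda-x|\d\mu(x)=\log g(\lambda)-\log|a_m|$, and one reads off $\mu=\frac{1}{2\pi}\Delta\log g$ via the jump of the $\mathrm{Im}\lambda$-derivative of $\log|z_{m+1}|$ across $\R$. This gives the same computation.
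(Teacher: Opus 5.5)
Your argument is correct, but it takes a genuinely different route from the paper.

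\textbf{The paper's route.} The paper never evaluates Hirschman's density. It uses the Szeg\H{o} trace asymptotics $\frac{1}{n}\operatorname{tr}\mathbf{T}_n(a)^s\to c_0(a^s)$ for both $a=f_m$ and $a=f_m\circ p$. It then shows $c_0\bigl((f_m\circ p)^s\bigr)=c_0(f_m^s)$ for every $s$ by a contour deformation: $z^{-k-1}$ has a primitive on $\C\setminus\{0\}$, and the correction term $\int\lambda(\theta)^s\,r'(\theta)/r(\theta)\,\d\theta$ vanishes because the integrand is odd. It concludes by moment determinacy on the compact interval $[A,B]$. Hirschman's theorem and Proposition~\ref{prop: eval convex humm} enter only to provide a limit measure and a fixed compact support, so that moments pass to the limit.

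\textbf{Your route.} You compute both densities pointwise. The Cauchy--Riemann relation $\partial_{\mathrm{Im}\lambda}\log|z|=-\partial_{\mathrm{Re}\lambda}\arg z$ turns the two one-sided normal derivatives of $\log|z_{m+1}|$ across $\R$ into $2|\alpha'(\lambda)|$. That is exactly twice the Hermitian pushforward density $\frac{1}{\pi}|\alpha'(\lambda)|$. In effect this is an infinitesimal version of the paper's contour deformation.

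\textbf{What each buys.} Your approach gives the explicit density formula of the subsequent corollary directly. It needs neither the trace formula for the non-Hermitian matrices nor a moment argument. The price is reliance on the precise form of Hirschman's theorem: absolute continuity with respect to arc length and the one-sided normal-derivative convention. The paper's argument is global. It also records the invariance of all first-order Szeg\H{o} data, which the later remark uses.

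\textbf{Points to tighten.}
\begin{itemize}
\item The individual roots $z_{m+2},\dots,z_{2m}$ need not be holomorphic, since they may collide among themselves. Their product is holomorphic and non-vanishing, because it is a symmetric function of a cluster separated from the other roots by a circle. That is all you use.
\item In Step~3, ``both are probability measures'' does not by itself exclude atoms at $\lambda_1,\lambda_2$. Argue instead as follows. The Hermitian pushforward has no atoms, since $\lambda(\alpha)$ is strictly monotone on $(0,\pi)$. Its density integrates to $1$ over $(\lambda_1,\lambda_2)$. Hirschman's measure agrees with it there, so no mass is left for the endpoints. Alternatively, invoke the absolute continuity in Hirschman's theorem directly.
\item The one-sided derivative of a maximum of two $C^1$ functions that coincide on $\R$ is the maximum of their one-sided derivatives. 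Hence your value $|\theta'(q)|$ for each normal derivative holds regardless of the sign of $\theta'(q)$, or even if it vanishes.
\end{itemize}
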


\begin{proof}
Denote by $\mu_n$ and $\tilde\mu_n$ the empirical measures \eqref{eq: empirical measure} of $\mathbf T_n(f_m)$ and of $\mathbf T_n(f_m\circ p)$ respectively. Since $f_m$ has real coefficients, the curve $p(\mathbb{T})$ is invariant under complex conjugation and
\begin{equation}\label{eq: evenness}
    r(-\theta) = r(\theta) \qquad\text{and}\qquad \lambda(-\theta) = \lambda(\theta), \qquad \theta\in[-\pi,\pi].
\end{equation}
Moreover $\lambda=f_m\circ p$ is real valued and bounded, since $p$ is continuous and $f_m$ is continuous on the compact set $p(\mathbb{T})$. In particular $f_m,\;f_m\circ p\in L^{\infty}(\mathbb{T})$.

\medskip
\noindent\textit{Step 1: Trace asymptotics for both families.}
By the first Szeg\H{o} limit theorem in the form of \cite[Lemma 5.16, Theorem 5.17]{LargeTruncatedToeplitz}, for every $a\in L^{\infty}(\mathbb{T})$ and every $s\in\N$,
\begin{equation}\label{eq: BS trace formula}
    \tfrac1n\operatorname{tr}\bigl(\mathbf T_n(a)^{s}\bigr) = c_0\bigl(a^{s}\bigr)+o(1), \qquad n\to\infty .
\end{equation}
Applied to $a=f_m$ and to $a=f_m\circ p$ this gives
\begin{equation}\label{eq: two trace limits}
    \lim_{n\to\infty}\tfrac1n\operatorname{tr}\bigl(\mathbf T_n(f_m)^{s}\bigr) = c_0\bigl(f_m^{s}\bigr), \qquad \lim_{n\to\infty}\tfrac1n\operatorname{tr}\bigl(\mathbf T_n(f_m\circ p)^{s}\bigr) = c_0\bigl((f_m\circ p)^{s}\bigr).
\end{equation}

\medskip
\noindent\textit{Step 2: Deforming $\mathbb{T}$ into $p(\mathbb{T})$ leaves the
zeroth Fourier coefficient unchanged.}
Fix $s\in\N$ and put $g:=f_m^{s}$, a Laurent polynomial with real coefficients, say $g(z)=\sum_{|k|\le sm}b_k z^{-k}$, so that $c_0(g)=b_0$ and $g\circ p=\lambda^{s}$. Since $r\in C^{1}$ and $r>0$, the curve $p(\mathbb{T})$ is a closed $C^{1}$ curve in $\C\setminus\{0\}$, $z(\theta):=r(\theta)e^{\i\theta}$, and
\begin{equation}
    \frac{z'(\theta)}{z(\theta)} = \frac{r'(\theta)}{r(\theta)} + \i , \qquad\text{that is,}\qquad \i\,\d\theta = \frac{\d z}{z} - \d\bigl(\ln r(\theta)\bigr).
\end{equation}
Multiplying by $g(z(\theta))$ and integrating over $[-\pi,\pi]$ gives
\begin{equation}\label{eq: integral form}
    \i\int_{-\pi}^{\pi} g\bigl(z(\theta)\bigr)\,\d\theta \;=\; \oint_{p(\mathbb{T})}\frac{g(z)}{z}\,\d z \;-\; \int_{-\pi}^{\pi} g\bigl(z(\theta)\bigr) \frac{r'(\theta)}{r(\theta)}\,\d\theta .
\end{equation}
In the contour integral, expand $g(z)/z=\sum_{|k|\le sm}b_k z^{-k-1}$. For $k\neq 0$ the integrand $z^{-k-1}$ has the primitive $-z^{-k}/k$ on $\C\setminus\{0\}$, so its integral over the closed curve $p(\mathbb{T})$ vanishes, whereas for $k=0$, using $r(-\pi)=r(\pi)$,
\begin{equation}
    \oint_{p(\mathbb{T})}\frac{\d z}{z} = \int_{-\pi}^{\pi}\left(\frac{r'(\theta)}{r(\theta)}+\i\right)\d\theta = \Bigl[\ln r(\theta)\Bigr]_{-\pi}^{\pi} + 2\pi \i = 2\pi \i ,
\end{equation}
so that 
\begin{equation}
    \oint_{p(\mathbb{T})} g(z)z^{-1}\,\d z = 2\pi \i\,b_0 = 2\pi \i\,c_0(f_m^{s}).
\end{equation}
In the last term of \eqref{eq: integral form}, the factor $g(z(\theta))=\lambda(\theta)^{s}$ is even by \eqref{eq: evenness}, while $r'/r$ is odd because $r$ is even. The integrand is therefore odd on the symmetric interval $[-\pi,\pi]$ and the integral vanishes. Hence \eqref{eq: integral form} reduces to
\begin{equation}\label{eq: zeroth coefficients agree}
    c_0\bigl((f_m\circ p)^{s}\bigr) = \frac{1}{2\pi}\int_{-\pi}^{\pi}\lambda(\theta)^{s}\,\d\theta = c_0\bigl(f_m^{s}\bigr), \quad \forall s\in\N .
\end{equation}

\medskip
\noindent\textit{Step 3: Convergence of moments.}
By Theorem \ref{Thm: real sectra criterion},
\begin{equation}\label{eq: open limits agree}
    \sigma_{\mathrm{open}}\bigl(\mathbf T(f_m)\bigr) = \sigma_{\mathrm{open}}\bigl(\mathbf T(f_m\circ p)\bigr) = [A,B] \subset \R ,
\end{equation}
and this set is compact and connected by Theorem~\ref{thm: connectedness of open limit}. By Theorem~\ref{thm: Hirschmann weak limit}, $\mu_n\rightharpoonup\mu$ for a probability measure $\mu$ supported on $[A,B]$. By Proposition \ref{prop: eval convex humm} every $\mu_n$ is supported in the fixed compact set $K:=\mathrm{conv}\,\mathcal R(f_m)\subset\C$, so, choosing $\varphi\in C_c(\C)$ with $\varphi(z)=z^{s}$ on $K\cup[A,B]$, weak convergence together with $\operatorname{tr}(A^{s})=\sum_j\lambda_j(A)^{s}$ (algebraic multiplicities counted) and \eqref{eq: two trace limits} yields
\begin{equation}\label{eq: moments of mu}
    \int_{\R}\lambda^{s}\,\d\mu(\lambda) = \lim_{n\to\infty}\tfrac1n\operatorname{tr}\bigl(\mathbf T_n(f_m)^{s}\bigr) = c_0\bigl(f_m^{s}\bigr), \quad \forall s\in\N .
\end{equation}

By Theorem \ref{Thm: real sectra criterion}, every $\tilde\mu_n$ is a probability measure on the fixed compact interval $I:=[A,B]$, and by
\eqref{eq: two trace limits}
\begin{equation}\label{eq: moments of mutilde n}
    \int_{\R} x^{s}\,\d\tilde\mu_n(x) = \tfrac1n\operatorname{tr}\bigl(\mathbf T_n(f_m\circ p)^{s}\bigr)  \;\xrightarrow[n\to\infty]{}\; c_0\bigl((f_m\circ p)^{s}\bigr),  \quad \forall s\in\N .
\end{equation}
Since all $\tilde\mu_n$ live on $I$, the sequence is tight, and any two subsequential weak limits are probability measures on $I$ with the same moments \eqref{eq: moments of mutilde n}. As polynomials are dense in $C(I)$ by the Weierstrass approximation theorem, such limits coincide. Hence $\tilde\mu_n\rightharpoonup\tilde\mu$, where $\tilde\mu$ is the unique probability measure on $I$ with moments $c_0\bigl((f_m\circ p)^{s}\bigr)$. The pushforward of the normalised Lebesgue measure on $\mathbb{T}$ under $\lambda$ has exactly these moments, namely $\frac{1}{2\pi}\int_{-\pi}^{\pi}\lambda(\theta)^{s}\,\d\theta = c_0\bigl((f_m\circ p)^{s}\bigr)$, and is supported on $\lambda([-\pi,\pi])=[A,B]$ by \eqref{eq: spectral limit}. By uniqueness,
\begin{equation}\label{eq: hermitian DoS}
    \int_{\R}\varphi\,\d\tilde\mu = \frac{1}{2\pi}\int_{-\pi}^{\pi}\varphi\bigl(\lambda(\theta)\bigr)\,\d\theta, \quad \varphi\in C(\R),
\end{equation}
and $\operatorname{supp}\tilde\mu\subset[A,B]$.

\medskip
\noindent\textit{Step 4: Conclusion.}
Combining \eqref{eq: moments of mu}, \eqref{eq: zeroth coefficients agree} and \eqref{eq: moments of mutilde n},
\begin{equation}
    \int_{\R}\lambda^{s}\,\d\mu = c_0\bigl(f_m^{s}\bigr) = c_0\bigl((f_m\circ p)^{s}\bigr) = \int_{\R}\lambda^{s}\,\d\tilde\mu, \quad \forall s\in\N .
\end{equation}
Both $\mu$ and $\tilde\mu$ are probability measures supported on the compact interval $[A,B]\subset\R$, on which polynomials are dense in $C([A,B])$. Hence $\int\varphi\,\d\mu=\int\varphi\,\d\tilde\mu$ for all $\varphi\in C([A,B])$, and $\mu=\tilde\mu$ as Borel measures on $\R$ by the Riesz representation theorem. Together with \eqref{eq: hermitian DoS} this proves the claim.
\end{proof}

In Figure \ref{Fig: QuasiSimilarity} we numerically illustrate the convergence of the empirical measure for a non-Hermitian banded Toeplitz matrix, for which there exists a polar curve along which the symbol function is real-valued.

\begin{figure}[h]
    \centering
    \subfloat[][Polar curve $p(\mathbb{T})$ and the unit torus $\mathbb{T}$.]%
    {\includegraphics[width=0.3\linewidth]{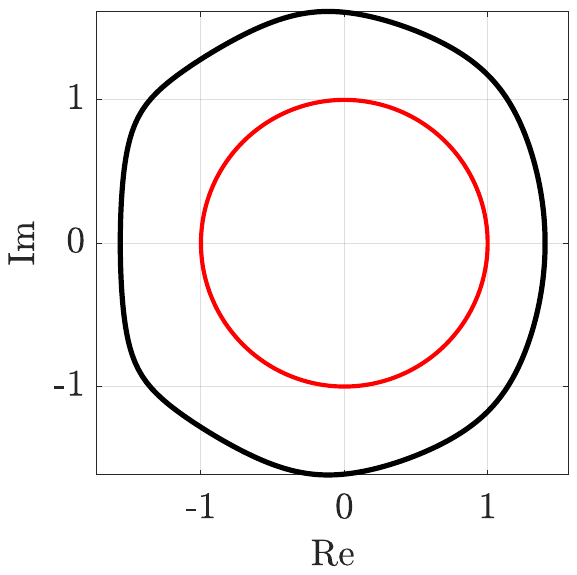}}\quad
    \subfloat[][Density of states of  $\mathbf{T}_n(f)$ and the empirical measure of $\mathbf{T}_n(f\circ p)$.]%
    {\includegraphics[width=0.65\linewidth]{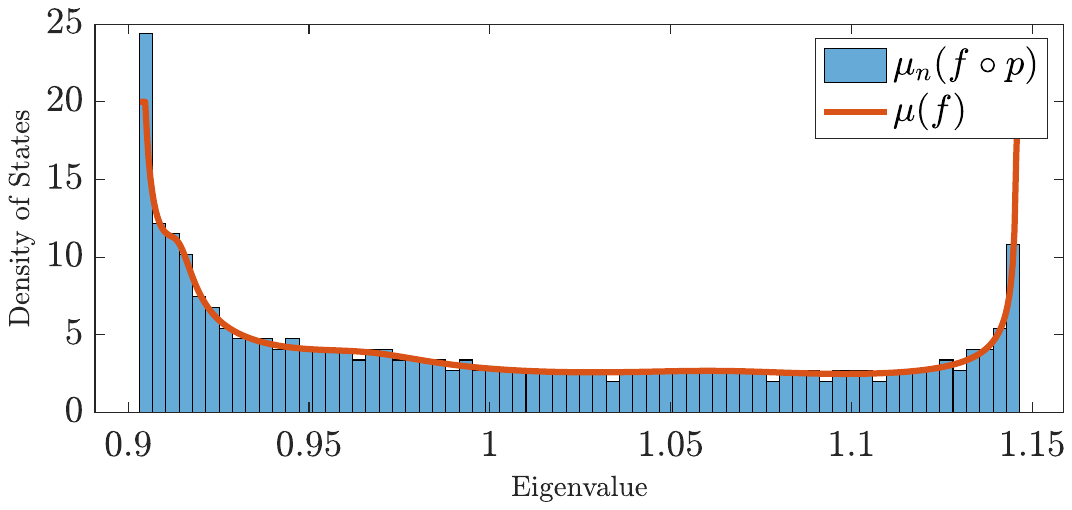}}
    \caption{Figure (A) illustrates the polar curve, which is not just a scaling of the torus but rather a homotopic deformation of the torus. Figure (B) illustrates that even for relatively small matrix sizes, i.e. $n = 100$, the empirical measure of $\mathbf{T}_n(f\circ p)$ approaches the density of states $\mathbf{T}_n(f)$. Computation performed for $\kappa = 3.5 $, $\rho = 4.8$ and $m = 7$.}
    \label{Fig: QuasiSimilarity}
\end{figure}  

In the proof of Theorem \ref{thm: convergence of DoS} a formula for the DoS of Hermitian Toeplitz matrices was presented. Note that this formula is considerably simpler than the general formula for non-Hermitian Toeplitz matrices presented in \eqref{eq: measure Hirschmann}. In other words, to find the density of states for a non-Hermitian matrix $\mathbf{T}_n(f)$, it is sufficient to compute the density of states for the Hermitian matrix $\mathbf{T}_n(f\circ p)$. We have the following result.

\begin{corollary}
    Suppose that the contour traced out by \eqref{eq: polar curve bijection} is $C^1([0, 2\pi])$, then for a Borel subset $E\subset \R$ the $\dos\bigl(\mathbf{T}_n(f) \bigr)$ is given by the measure,
    \begin{equation}\label{eq: DOS Hermitian}        \mu(E) = \frac{1}{2\pi}\int_{0}^{2\pi} \chi_E\big(f\circ p(e^{\i\theta})\big) \d\theta.
    \end{equation}
\end{corollary}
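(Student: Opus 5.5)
The plan is to read the corollary off the proof of Theorem \ref{thm: convergence of DoS}. That proof already identifies the weak limit $\mu$ of the empirical measures of $\mathbf{T}_n(f)$ with the weak limit $\tilde\mu$ of those of $\mathbf{T}_n(f\circ p)$. It also characterises $\tilde\mu$ through \eqref{eq: hermitian DoS}: for every $\varphi\in C(\R)$,
\begin{equation*}
    \int_\R \varphi\,\d\mu = \frac{1}{2\pi}\int_{-\pi}^{\pi}\varphi\bigl(\lambda(\theta)\bigr)\d\theta, \qquad \lambda(\theta) = f\circ p(e^{\i\theta}).
\end{equation*}
What remains is to pass from continuous test functions to indicator functions of Borel sets. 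I will also check that the hypothesis of the corollary (a $C^1$ contour traced out by \eqref{eq: polar curve bijection}) is read together with the standing assumptions of Theorem \ref{thm: convergence of DoS}: that $\mathbf{Z}(f_m)$ is spanned by this polar curve and that Assumption \ref{Confluence Assumption} holds. Those assumptions are needed so that Theorem \ref{Thm: real sectra criterion} and Theorem \ref{thm: convergence of DoS} apply.

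\textbf{Step 1: the pushforward.} Define the pushforward measure $\nu := \lambda_\#\bigl(\tfrac{1}{2\pi}\d\theta\bigr)$ on $\R$. This is legitimate because $\lambda$ is continuous, hence Borel measurable, and real valued. By construction,
\begin{equation*}
    \nu(E) = \frac{1}{2\pi}\int_{-\pi}^{\pi}\chi_E\bigl(\lambda(\theta)\bigr)\d\theta
\end{equation*}
for every Borel set $E\subset\R$. This is exactly the right-hand side of \eqref{eq: DOS Hermitian}, after shifting the parametrisation interval from $[-\pi,\pi]$ to $[0,2\pi]$, which is harmless by $2\pi$-periodicity. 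The change of variables formula then gives $\int\varphi\,\d\nu = \frac{1}{2\pi}\int\varphi(\lambda(\theta))\,\d\theta$ for every bounded Borel $\varphi$.

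\textbf{Step 2: identify $\mu$ with $\nu$.} Both $\mu$ and $\nu$ are finite Borel measures supported in the compact interval $[A,B]$. For $\mu$ this follows from Theorem \ref{thm: Hirschmann weak limit} combined with Theorem \ref{Thm: real sectra criterion}; for $\nu$ it follows because $\lambda([-\pi,\pi]) = [A,B]$. The display above, together with Step 1, shows that $\mu$ and $\nu$ agree on all of $C(\R)$. By the uniqueness part of the Riesz representation theorem on $C([A,B])$, finite Borel measures on a compact metric space are determined by their integrals against continuous functions. Alternatively, since both measures are finite and Borel on $\R$, they are regular, so one can approximate $\chi_E$ for closed $E$ by continuous functions decreasing pointwise and apply a $\pi$–$\lambda$ argument. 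Either way $\mu(E)=\nu(E)$ for all Borel $E$, which proves \eqref{eq: DOS Hermitian}.

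\textbf{Main obstacle.} The only genuinely delicate point is conceptual rather than computational. The DoS of the non-Hermitian family is a priori a measure on $\C$, so I have to make sure $\mu$ gives no mass off $\R$ before restricting to Borel $E\subset\R$. This is guaranteed by Theorem \ref{Thm: real sectra criterion} together with the support statement in Theorem \ref{thm: Hirschmann weak limit}. Beyond that, the argument reduces to the standard extension from continuous functions to Borel indicators.
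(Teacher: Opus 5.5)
Your proposal is correct and follows the paper's route: you reduce to the Hermitian matrix $\mathbf{T}_n(f\circ p)$ via Theorem \ref{thm: convergence of DoS}, then identify its DoS with the pushforward of normalised Lebesgue measure under $\theta\mapsto f\circ p(e^{\i\theta})$. The only difference is that the paper cites Grenander--Szeg\H{o} for the Hermitian formula, whereas you read it off \eqref{eq: hermitian DoS} in the theorem's proof and spell out the passage from continuous test functions to Borel indicators via Riesz uniqueness, which the paper leaves implicit.
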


\begin{proof}
    As the assumptions for Theorem \ref{thm: convergence of DoS} are met, computing the DoS of $\mathbf{T}_n(f)$ is equivalent to computing the DoS of the Hermitian matrix $\mathbf{T}_n(f\circ p)$.
    Following the formula of the density of states of a Hermitian Toeplitz matrix \cite[Section 7.5]{GrenanderSzego+1958}, it follows that the density of states is given by \eqref{eq: DOS Hermitian}.
\end{proof}

\begin{remark}
    The substitution $f_m\mapsto f_m\circ p$ preserves exactly one piece of spectral information, and it is worth recording precisely which. Equation \eqref{eq: zeroth coefficients agree} states that
    \begin{equation}\label{eq:firstorder}
        c_0\bigl((f_m\circ p)^s\bigr)=c_0\bigl(f_m^s\bigr),\quad s\in\mathbb N,
    \end{equation}
    equivalently, by the first Szeg\H{o} limit theorem,
    \begin{equation}
        \lim_{n\to\infty}\tfrac1n\operatorname{tr} \mathbf T_n(f_m)^s =\lim_{n\to\infty}\tfrac1n\operatorname{tr} \mathbf T_n(f_m\circ p)^s,\quad s\in\mathbb N .
    \end{equation}
    That is, the deformation leaves the \emph{first-order} Szeg\H{o} data invariant. In order to achieve sharper pointwise eigenvalue estimates, the higher order  Szeg\H{o} terms would have to be preserved, which is only possible for $p: z \mapsto rz,~r>0$. This will be investigated further in a forthcoming paper.
\end{remark}

In conclusion, the class of non-Hermitian Toeplitz matrices, as intended by Theorem \ref{thm: convergence of DoS}, many results such as eigenvalues estimates and convergence rates follow from existing results for Hermitian matrices. We refer the reader to \cite{BookSpectraBandedToepllitz,ComputeBandedToeplitz, doi:10.1137/22M1529920, LargeTruncatedToeplitz} for a detailed study of the subject.

\subsection{Density of states of asymptotically equivalent matrices.} In the present section, we demonstrate that the convergence of density of states presented in Theorem \ref{thm: convergence of DoS} may also be applied to matrices which are not entry-wise Toeplitz. This result will be of particular importance for considering finite but large physical systems, as the finiteness often incorporates possible edge effects, which may perturb the edge entries of the  Toeplitz matrix approximation. 
We will present results similar to \cite{ammari2023spectralConvergence} for non-Hermitian matrices with real asymptotic spectra. For an $n\times n$ matrix $\mathbf{M}$, let us define the normalised Frobenius norm as
\begin{equation}
    \lvert \mathbf{M} \rvert^2 := \frac{1}{n} \sum_{i,j}^n |\mathbf{M}_{i,j}\rvert^2.
\end{equation}

\begin{definition}
    Two sequences of $n\times n$ matrices $\mathbf{A}_n$ and $\mathbf{B}_n$ are said to be \emph{asymptotically equivalent} if they have bounded operator norm, that is, $\lVert \mathbf{A}_n \rVert$, $\lVert \mathbf{B}_n \rVert < \infty$, and 
    \begin{equation}
        \lim_{n \to \infty} \lvert \mathbf{A}_n - \mathbf{B}_n\rvert = 0.
    \end{equation}
    If one of the two matrices is a Toeplitz matrix, then the other is said to be \emph{asymptotically Toeplitz}.
\end{definition}
We have the following result for  asymptotically equivalent matrices \cite[Corollary 2.1]{1054924}.

\begin{theorem}
    Let $\mathbf{A}_n$ and $\mathbf{B}_n$ be asymptotically equivalent sequences of matrices with eigenvalues $a_{n,k}$ and $b_{n,k}$ respectively, then the moments agree, that is, 
    \begin{equation}
        \lim_{n\to \infty}\frac{1}{n} \sum_{k = 1}^n a_{n,k}^s = \lim_{n\to \infty}\frac{1}{n} \sum_{k = 1}^n b_{n,k}^s, ~\forall s \in \N.
    \end{equation}
\end{theorem}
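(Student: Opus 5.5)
The plan is to reduce the equality of moments to the trace-class behaviour of powers of the difference, using only the normalised Frobenius norm and the uniform operator-norm bound. Set $M := \sup_n \max\{\lVert \mathbf{A}_n\rVert, \lVert \mathbf{B}_n\rVert\} < \infty$ and $\mathbf{D}_n := \mathbf{A}_n - \mathbf{B}_n$, so that $\lvert \mathbf{D}_n\rvert \to 0$. Since $\operatorname{tr}(\mathbf{A}_n^s) = \sum_k a_{n,k}^s$ (algebraic multiplicities counted, via the Schur triangularisation), the statement is equivalent to
\begin{equation}
    \frac{1}{n}\bigl|\operatorname{tr}(\mathbf{A}_n^s) - \operatorname{tr}(\mathbf{B}_n^s)\bigr| \to 0, \qquad \forall s\in\N,
\end{equation}
interpreted as: whenever one limit exists, so does the other, and they coincide. (Both normalised traces are bounded by $M^s$, so at least subsequential limits exist in any case.)

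First I will telescope the difference of powers:
\begin{equation}
    \mathbf{A}_n^s - \mathbf{B}_n^s = \sum_{j=0}^{s-1} \mathbf{A}_n^{j}\,\mathbf{D}_n\,\mathbf{B}_n^{s-1-j}.
\end{equation}
Next I bound each term's trace using the Cauchy--Schwarz inequality for the Hilbert--Schmidt inner product. Write $\operatorname{tr}(\mathbf{A}_n^{j}\mathbf{D}_n\mathbf{B}_n^{s-1-j}) = \operatorname{tr}(\mathbf{D}_n\mathbf{C}_n)$ with $\mathbf{C}_n := \mathbf{B}_n^{s-1-j}\mathbf{A}_n^{j}$. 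Then
\begin{equation}
    \tfrac1n|\operatorname{tr}(\mathbf{D}_n\mathbf{C}_n)| \le \tfrac1n\lVert \mathbf{D}_n\rVert_F\lVert \mathbf{C}_n\rVert_F = \lvert \mathbf{D}_n\rvert\,\lvert \mathbf{C}_n\rvert.
\end{equation}
The one ingredient to check is $\lvert \mathbf{C}_n\rvert \le \lVert \mathbf{C}_n\rVert \le M^{s-1}$. This holds because $\lVert \mathbf{C}\rVert_F^2 = \sum_i \lVert \mathbf{C} e_i\rVert^2 \le n\lVert \mathbf{C}\rVert^2$, so the normalised Frobenius norm is dominated by the operator norm.

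Summing over $j$ then gives $\tfrac1n|\operatorname{tr}(\mathbf{A}_n^s)-\operatorname{tr}(\mathbf{B}_n^s)| \le s\,M^{s-1}\lvert \mathbf{D}_n\rvert \to 0$, which is the claim. There is no real obstacle here; the only delicate point is the normalisation. The unnormalised bound $|\operatorname{tr}(\mathbf{D}\mathbf{C})|\le \lVert \mathbf{D}\rVert\,\lVert \mathbf{C}\rVert_{1}$ would lose a factor $n$. Pairing the two Hilbert--Schmidt norms, each carrying a factor $\sqrt n$, is exactly what makes the $1/n$ normalisation come out right. I would also note explicitly that the bounded operator norm is needed uniformly in $n$, which is how the definition should be read.
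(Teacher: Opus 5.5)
Your proof is correct. The telescoping identity, cyclicity of the trace, the Hilbert--Schmidt Cauchy--Schwarz bound $\tfrac1n|\operatorname{tr}(\mathbf{D}_n\mathbf{C}_n)|\le \lvert\mathbf{D}_n\rvert\,\lvert\mathbf{C}_n\rvert$ and the estimate $\lvert\mathbf{C}_n\rvert\le\lVert\mathbf{C}_n\rVert\le M^{s-1}$ together give $\tfrac1n\bigl|\operatorname{tr}\mathbf{A}_n^s-\operatorname{tr}\mathbf{B}_n^s\bigr|\le sM^{s-1}\lvert\mathbf{A}_n-\mathbf{B}_n\rvert\to0$, and you read the hypotheses the right way (uniform operator-norm bound, equality whenever one limit exists). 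The paper gives no proof of its own and only cites \cite[Corollary 2.1]{1054924}, whose standard argument is essentially yours: expand $\mathbf{A}_n^s-\mathbf{B}_n^s$ into products each containing $\mathbf{D}_n$, bound them via $\lvert\mathbf{G}\mathbf{H}\rvert\le\lVert\mathbf{G}\rVert\,\lvert\mathbf{H}\rvert$, and conclude with the normalised-trace bound $\tfrac1n|\operatorname{tr}\mathbf{X}|\le\lvert\mathbf{X}\rvert$.
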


Asymptotic equivalence is not sufficient to specify the pointwise behaviour of individual eigenvalues, but it allows one to characterise their density of states. 
We have the following result on the spectral distribution of asymptotically equivalent matrices.
\begin{corollary}\label{cor: Asymptotic equivalence and DoS}
    Let $\mathbf{A}_n$ be a matrix that has asymptotically real spectra and is asymptotically equivalent to a Toeplitz matrix $\mathbf{T}_n$ such that $\sigma_{\mathrm{open}}\bigl(\mathbf{T}(f)\bigr) \subseteq \R$, then both matrices asymptotically have the same density of states.
\end{corollary}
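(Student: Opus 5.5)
The plan is to reduce the statement to the moment argument used in the proof of Theorem \ref{thm: convergence of DoS}, with the theorem on asymptotically equivalent sequences \cite[Corollary 2.1]{1054924} inserted in the middle. Let $a_{n,k}$ denote the eigenvalues of $\mathbf{A}_n$ and $t_{n,k}$ those of $\mathbf{T}_n = \mathbf{T}_n(f)$. Let $\nu_n$ and $\mu_n$ be the corresponding empirical measures \eqref{eq: empirical measure}.

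\emph{Step 1: the limit for the Toeplitz family.} By Theorem \ref{thm: Hirschmann weak limit}, $\mu_n \rightharpoonup \mu$, where $\mu$ is a probability measure supported on $\sigma_{\mathrm{open}}\bigl(\mathbf{T}(f)\bigr)$. This set is compact and real by hypothesis, and it is an interval $[A,B]$ by Theorem \ref{thm: connectedness of open limit}. Exactly as in Step 3 of the proof of Theorem \ref{thm: convergence of DoS}, all $\mu_n$ live in a fixed compact set. Testing against a compactly supported function equal to $z^s$ there gives
\begin{equation}
\int \lambda^s \, \d\mu = \lim_{n\to\infty} \frac1n \sum_{k} t_{n,k}^s, \qquad s \in \N.
\end{equation}

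\emph{Step 2: transfer of moments.} By asymptotic equivalence and \cite[Corollary 2.1]{1054924}, the limits $\lim_n \frac1n \sum_k a_{n,k}^s$ exist and equal $\int \lambda^s\,\d\mu$ for every $s \in \N$.

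\emph{Step 3: identifying the limit of $\nu_n$.} Since $\lVert \mathbf{A}_n \rVert \le C$ uniformly, every $\nu_n$ is supported in the disc $\{|z|\le C\}$. The family is therefore tight, and every subsequence has a weakly convergent sub-subsequence with limit $\nu$, a probability measure on that disc.

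Here I use the hypothesis that $\mathbf{A}_n$ has asymptotically real spectrum, which I read as $\sup_k |\mathrm{Im}\, a_{n,k}| \to 0$. Under this reading, any such limit $\nu$ is supported on $[-C,C] \subset \R$. Passing to the limit in $\int z^s\,\d\nu_n$, which is legitimate since $z^s$ is continuous and bounded on the disc, shows that $\nu$ has the same moments as $\mu$. Two probability measures on a compact subset of $\R$ with equal moments coincide, by Weierstrass density of polynomials and the Riesz representation theorem, as in Step 4 of Theorem \ref{thm: convergence of DoS}. Hence $\nu = \mu$. Since every subsequential limit equals $\mu$, the whole sequence satisfies $\nu_n \rightharpoonup \mu$, so the two densities of states agree.

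\emph{Main obstacle.} The delicate point is Step 3, namely the passage from equal moments to equal measures. For measures in the complex plane, the holomorphic moments $\int z^s\,\d\nu$ alone do not determine $\nu$. It is precisely the reality of the limiting spectrum that lets us apply Weierstrass approximation on a real interval, so the interpretation of ``asymptotically real spectra'' has to be fixed carefully. If only the weaker statement that the Hausdorff limit of $\sigma(\mathbf{A}_n)$ is real is available, I would first deduce that every weak limit is supported in that real set, and then run the same argument.
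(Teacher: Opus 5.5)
Your proposal is correct and follows the paper's route. Both arguments transfer the moments through the asymptotic-equivalence theorem, identify the Toeplitz limit with Hirschman's theorem, and conclude by moment determinacy for compactly supported measures on $\R$. Your tightness and subsequence argument, together with the explicit use of the asymptotically-real hypothesis to place every subsequential limit of the empirical measures of $\mathbf{A}_n$ on a compact real set, fills in a step the paper's proof only assumes, namely that this limit exists and is supported on $\R$.
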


\begin{proof}
    We have that all moments are finite,
    \begin{equation}
        \lim_{n\to\infty} \int x^s \d \mu_n^{(\mathbf{A}_n)}(x) = \lim_{n\to \infty}\frac{1}{n} \sum_{k = 1}^n a_{n,k}^s = \lim_{n\to \infty}\frac{1}{n} \sum_{k = 1}^n t_{n,k}^s = \lim_{n\to\infty} \int x^s \d \mu_n^{(\mathbf{T}_n)}(x), ~\forall s \in \N.
    \end{equation}
    Moreover, by Theorem \ref{thm: Hirschmann weak limit} $\mu_n^{(\mathbf{T}_n)}\rightharpoonup\mu^{(\mathbf{T}_n)}$ and since the measure $\mu$ is supported on a compact subset of $\R$, the moments uniquely determine the measure, from which it follows that $\mu^{(\mathbf{T}_n)} = \mu^{(\mathbf{A}_n)}$, which completes the proof.
\end{proof}

In other words, if a matrix $\mathbf{A}_n$ is asymptotically equivalent to a Toeplitz matrix $\mathbf{T}_n$ then their eigenvalues follow the same limiting distribution. This result will be of particular interest when considering the limit of finite but large systems, where the Toeplitz matrix representation may have perturbed entries, due to possible edge effects \cite{ammari2023nonhermitianskineffectthreedimensional, debruijn2025spectrapseudospectranonhermitiantoeplitz}

\subsection{Asymptotic similarity transform}\label{Sec: Asymptotic Similarity}
A common problem that is often encountered when computing the spectra of large non-Hermitian Toeplitz matrices is numerical pollution, as illustrated in Figure \ref{Fig: Eigval conditionner}. This comes from the fact that non-Hermitian Toeplitz matrices are often severely ill-conditioned \cite{trefethen.embree2005Spectra, REICHEL1992153, ComputeBandedToeplitz}. This computational bottleneck will be addressed in this section by introducing an asymptotic similarity transform.
In order to visualise the spectral pollution, we recall the following result \cite[Proposition 2.17.]{LargeTruncatedToeplitz}
\begin{proposition}\label{prop: eval convex humm}
    If $f \in L^\infty$, then $\sigma\bigl(\mathbf{T}_n(f)\bigr) \subset \operatorname{conv} \mathcal{R}(f)$, where $\mathcal{R}(f)$ is the essential range of $f$.
\end{proposition}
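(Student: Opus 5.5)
The plan is to prove that the spectrum of $\mathbf{T}_n(f)$ lies in $\operatorname{conv}\mathcal{R}(f)$ via the numerical range. Every eigenvalue of a matrix lies in its numerical range, so it suffices to show
\[
W\bigl(\mathbf{T}_n(f)\bigr)\subseteq \operatorname{conv}\mathcal{R}(f).
\]
Here $W(\mathbf{M})=\{\langle \mathbf{M}x,x\rangle : \|x\|=1\}$.

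The first step is to write the quadratic form as an integral against the symbol. Take $x=(x_1,\dots,x_n)\in\C^n$ with $\|x\|_2=1$ and define the trigonometric polynomial $\hat x(\theta)=\sum_{j=1}^n x_j e^{\i j\theta}$. Because the entries of $\mathbf{T}_n(f)$ are $a_{i-j}$, which are the Fourier coefficients of $f$, a direct expansion gives
\[
\langle \mathbf{T}_n(f)x,x\rangle=\frac{1}{2\pi}\int_{-\pi}^{\pi} f(e^{\i\theta})\,|\hat x(\theta)|^2\,\d\theta .
\]
One may need $e^{-\i\theta}$ instead of $e^{\i\theta}$, depending on the sign convention $f(z)=\sum a_k z^{-k}$ used in \eqref{eq: def symbol function operator}; this only amounts to replacing $\theta$ by $-\theta$. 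By Parseval, $\frac{1}{2\pi}\int|\hat x|^2\,\d\theta=\|x\|^2=1$. Hence $\d\nu:=\frac{1}{2\pi}|\hat x|^2\,\d\theta$ is a probability measure, and it is absolutely continuous with respect to Lebesgue measure.

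The second step is to show that averages of $f$ against such a $\nu$ lie in $\operatorname{conv}\mathcal{R}(f)$, where the hull is taken closed. We may assume $\mathcal{R}(f)$ is compact, since $f\in L^\infty$. By definition of the essential range, $f(e^{\i\theta})\in\mathcal{R}(f)$ for Lebesgue-almost every $\theta$, and therefore $\nu$-almost every $\theta$. Suppose $w:=\int f\,\d\nu$ were not in the compact convex set $K=\operatorname{conv}\mathcal{R}(f)$. By Hahn--Banach there would be a real linear functional $\ell$ and a constant $c$ with $\ell(w)>c\ge \ell(k)$ for all $k\in K$. Integrating gives $\ell(w)=\int \ell(f)\,\d\nu\le c$, which is a contradiction.

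Combining the two steps gives $\lambda=\langle \mathbf{T}_n(f)v,v\rangle\in K$ for every unit eigenvector $v$ with eigenvalue $\lambda$.

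The only delicate point I expect is the bookkeeping of the index and sign conventions in the first step, namely matching $(\mathbf{T}_n(f))_{ij}=a_{i-j}$ with the Fourier convention of \eqref{eq: def symbol function operator}. A second point is making clear that $\operatorname{conv}$ means the closed convex hull; for compact $\mathcal{R}(f)$ in $\C$ the convex hull is already closed, so this is harmless. Absolute continuity of $\nu$ is what lets us ignore the null set where $f$ leaves its essential range.
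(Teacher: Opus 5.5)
Your proof is correct. With the paper's conventions $(\mathbf{T}_n(f))_{ij}=a_{i-j}$ and $f(z)=\sum_k a_k z^{-k}$, the weight is $\bigl|\sum_j x_j e^{-\i j\theta}\bigr|^2$; this is the harmless reflection you anticipated, and the separation step is valid because $\mathcal{R}(f)$ is compact. The paper does not prove this statement itself but quotes \cite[Proposition 2.17]{LargeTruncatedToeplitz}, and your numerical-range argument, $\sigma(\mathbf{T}_n(f))\subseteq W(\mathbf{T}_n(f))\subseteq\operatorname{conv}\mathcal{R}(f)$ with the quadratic form written as an average of $f$ against a probability density, is the standard argument for that result.
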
 
Consequently, because of the similarity of $\mathbf{T}_n\big(f(\mathbb{T})\big) \sim \mathbf{T}_n\big(f(r\mathbb{T})\big)$ choosing $r$ in such a manner that $\operatorname{conv} \mathcal{R}\big(f(r\mathbb{T})\big)$ has the smallest measure, corresponds to preconditioning the matrix $\mathbf{T}_n\big(f(\mathbb{T})\big)$, see \cite{Beam1993TheAS}.

\begin{figure}[h]
    \centering
    \subfloat[][The numerically computed eigenvalues of $\mathbf{T}_n(f\circ p)$ and $\mathbf{T}_n(f)$ for $n = 40$ agree closely. A more detailed convergence study will be presented in Figure \ref{Fig: Pointwise eigenvalue convergence}.]%
    {\includegraphics[width=0.48\linewidth]{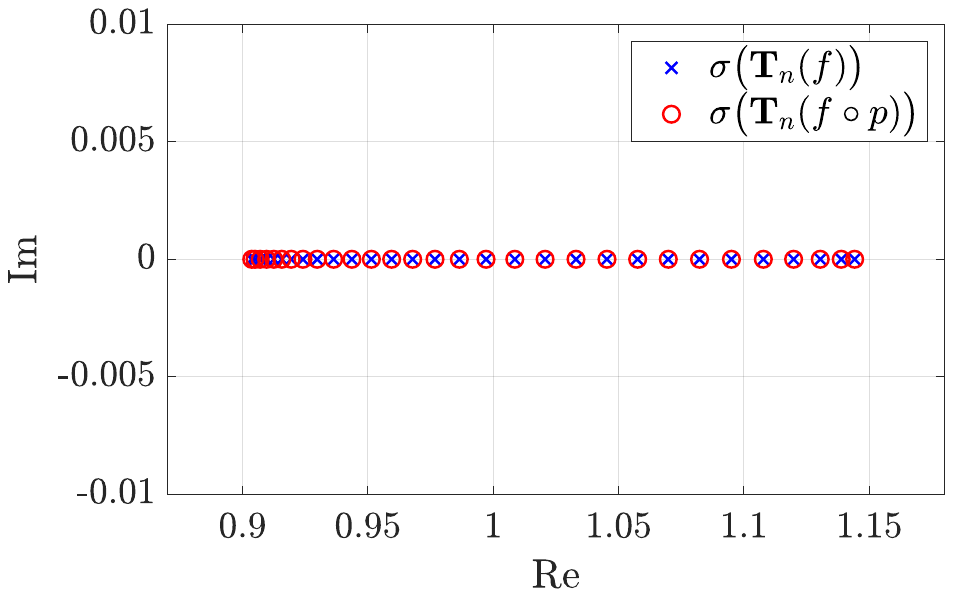}}\quad
    \subfloat[][Numerically computed spectra of $\mathbf{T}_n(f\circ p)$ and $\mathbf{T}_n(f)$ for $n = 2000.$ Many of the eigenvalues of $\mathbf{T}_n(f)$ fall outside of $\operatorname{conv}\mathcal{R}\big(f(r\mathbb{T})\big)$ and by Proposition \ref{prop: eval convex humm} constitute numerical error.]%
    {\includegraphics[width=0.49\linewidth]{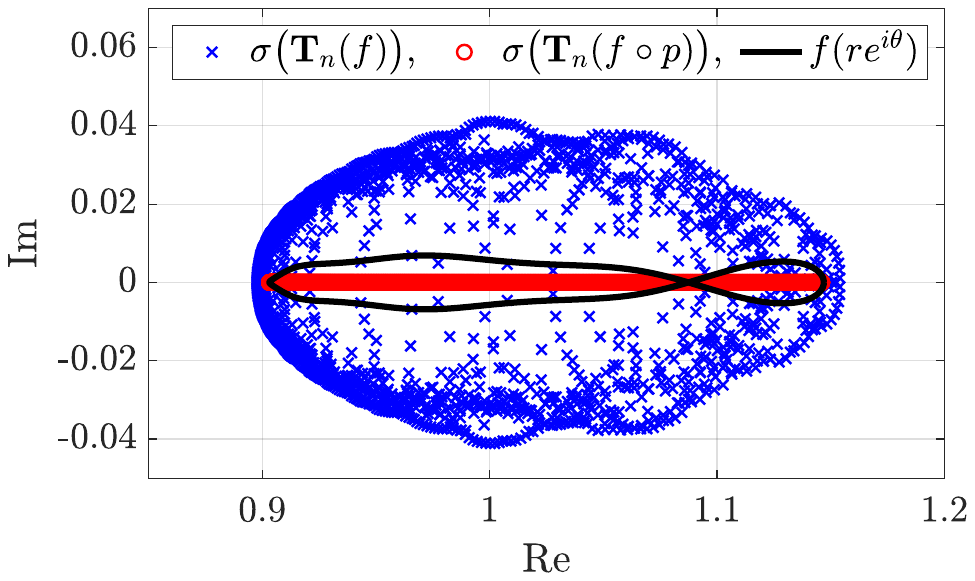}}
    \caption{The Hermitian matrix $\mathbf{T}_n(f\circ p)$ is less prone to  numerical pollution compared to the non-Hermitian matrix $\mathbf{T}_n(f)$. This motivates the idea of considering the evaluation of the symbol function on a deformed torus as a numerical preconditioner. Computation performed for $\kappa = 3.5 $, $\rho = 4.8$ and $m = 7$.}
    \label{Fig: Eigval conditionner}
\end{figure} 

In the case of tridiagonal matrices, the problem may be overcome by shrinking $\operatorname{conv}\mathcal{R}\big(f(r\mathbb{T})\big)$ to a line, just by scaling the unit torus with a correctly chosen $r$, see \cite{ammari2024spectra, ammari2024generalisedbrillouinzonenonreciprocal}. For banded Toeplitz matrices, this collapsing of the unit circle no longer applies as in the tridiagonal case, which is why we will allow general polar curves $p:\mathbb{T} \to \C$ instead of just scaling of the unit torus $p(z) = rz$. In this manner, we show that under the same assumptions as in Theorem \ref{Thm: real sectra criterion}, $\operatorname{conv}\mathcal{R}\big(f\circ p(\mathbb{T})\big)$ will collapse onto the line.

For non-Hermitian banded Laurent operator, i.e. the doubly infinite Toeplitz operator, one may formally consider the asymptotic similarity transform $\mathbf{L}(f) = \mathbf{M}^{-1}\mathbf{L}(f\circ p)\mathbf{M}$, where

\begin{equation}\label{eq: similarity transfrom}
    (\mathbf{M})_{i, j} = \frac{1}{2\pi}\int_0^{2\pi} p(e^{\i \theta})^{-j} e^{\i\theta i} \d \theta, \quad i,j\in \mathbb{Z}.
\end{equation}
In order to achieve a similar transform for finite Toeplitz matrices the Laurent operator would have to be truncated accordingly. 

We would like to emphasise that the asymptotic similarity transform presented in \eqref{eq: similarity transfrom} is the natural generalisation of the exact similarity transform given by a scaling of the torus. For a function $p(z) = rz$ for $r > 0$, the transform in \eqref{eq: similarity transfrom} reduces to the known exact similarity transform $\mathbf{M} = \operatorname{diag}(1, r, r^2, \dots, r^{n-1})$. This is due to the fact that $\mathbf{M}$ commutes with the projection $\mathbf{P}_n$.
We emphasise that the similarity transform introduced in \eqref{eq: similarity transfrom} is exponentially ill-conditioned, which is why we never use it in numerical computations.  This is due to the fact that the entries grow or decay exponentially in the index $j$. In practical terms, we instead compute the symmetrised matrix $\mathbf{T}_n(f\circ p)$ by numerically evaluating the Fourier coefficients $(\hat{f\circ p})_k$. More details on the computation of the Fourier coefficients can be found in the Appendix \ref{sec: Numerical analysis}. 

The transform $\mathbf{T}_n(f) \to \mathbf{T}_n(f\circ p)$ is expected to not be an exact similarity transform for finite $n$. This is due to the fact that the Fourier coefficients $(\hat{f\circ p})_k$ are, in general, no longer banded; see, for example, Figure \ref{Fig: Fourier coefficents composition} (B). Consequently, the coefficients $\big(\hat{f\circ p}\big)_k$ for $|k| >n$ will not influence the matrix $\mathbf{T}_n(f\circ p)$. However, we will provide numerical evidence that the spectrum is conserved pointwise under the transform $\mathbf{T}_n(f) \to \mathbf{T}_n(f\circ p)$ in Figure \ref{Fig: Pointwise eigenvalue convergence}.
For more details on the derivation of the similarity transform, we refer the reader to the Appendix \ref{Appendix: Similarity transform}.

We will consider the error of the spectrum when introducing the transform defined by $p$. Let us denote by $\lambda_k$ and $\tau_k$ the sorted eigenvalues of $\mathbf{T}_n(f)$ and $\mathbf{T}_n(f\circ p)$, respectively, then we define the $\ell^1$ distance between the spectra as:
\begin{equation}\label{def: l1 distance}
    d_\sigma\big(\mathbf{T}_n(f), \mathbf{T}_n(f\circ p)\big) := \sum_{k =1 }^n |\lambda_k-\tau_k|.
\end{equation}
Note that the $\ell^1$ distance is very sensitive to finitely many outliers.
Under the assumptions of Theorem \ref{Thm: real sectra criterion} the eigenvalues of $\mathbf{T}_n(f)$ and $ \mathbf{T}_n(f\circ p)$ will cluster with the same distribution along the same limiting set which consists of a finite interval on the real axis. As a consequence, it is expected that $|\lambda_k-\tau_k| = \mathcal{O}(n^{-1})$. This is the reason why the $\ell^1$ distance defined in \eqref{def: l1 distance} does not have a normalisation of $1/n$. An illustration of this is given in Remark \ref{rem: same open limit different spectra}.

\begin{remark}\label{rem: same open limit different spectra}
We would like to highlight that the observed  convergence rate of the spectrum of $\mathbf{T}_n(f)$ and $\mathbf{T}_n(f\circ p)$ as $n$ grows is not simply a consequence of the fact that the limiting spectra agree. For comparison, we consider the symbol functions $f(\theta) = \cos(\theta)$ and $g(\theta) = \cos(2 \theta)$,   which both correspond to Hermitian matrices. It is not hard to see that their open spectra agree: $\sigma_{\mathrm{open}}(f) = \sigma_{\mathrm{open}}(g) = [-1, 1]$ even though their DoS do not agree. The $\ell^1$ distance of this example is illustrated in Figure \ref{Fig: Pointwise eigenvalue convergence} (A), as it shows that the convergence observed in Figure \ref{Fig: Pointwise eigenvalue convergence} (B) is not a result of progressively filling the open limit with additional eigenvalues, but rather stems from sampling from the same distribution.
\end{remark}

\begin{figure}[h]
    \centering
    \subfloat[][Computation performed for $f(\theta) = \cos(\theta)$ and $g(\theta) = \cos(2\theta)$. The $\ell^1$ distance diverges even though their open limit agree.]%
    {\includegraphics[width=0.48\linewidth]{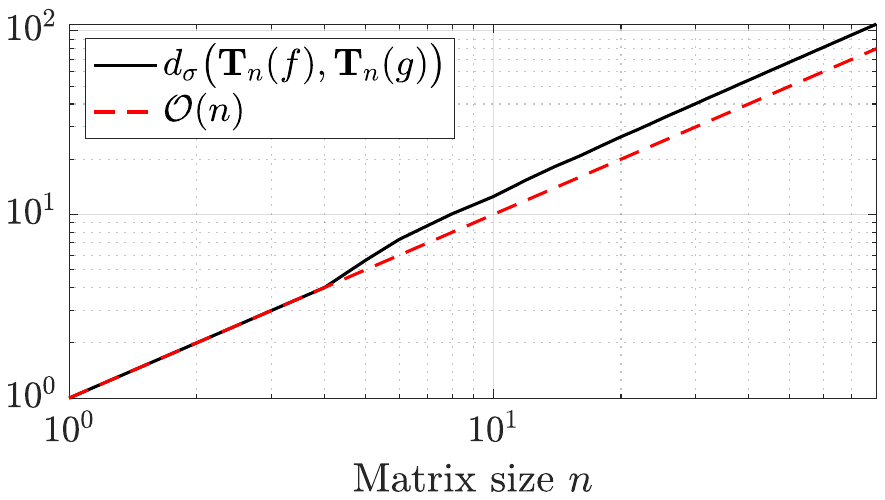}}\quad
    \subfloat[][Computation performed for $m=7, \kappa = 3.5$ and $\rho = 4.8$, i.e. the same Toeplitz matrix as in Figure \ref{Fig: Eigval conditionner} and Figure \ref{Fig: QuasiSimilarity}. ]%
    {\includegraphics[width=0.48\linewidth]{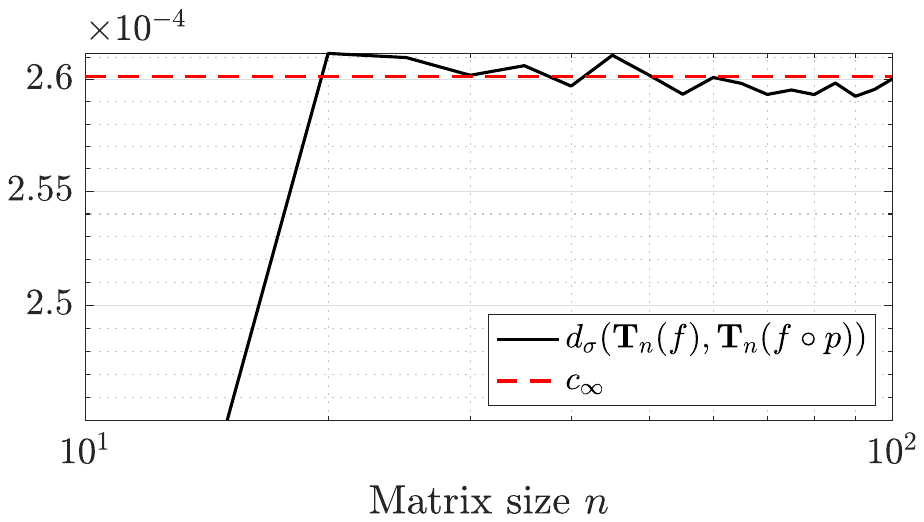}}
    \caption{Numerical experiments show that the $\ell^1$ distance converges to a constant, for Toeplitz matrices that have a closed polar curve $p(\mathbb{T})$. For more details on this we refer the reader to  Appendix \ref{sec: Numerical analysis}. Note that it is numerically difficult to simulate matrices for sizes larger than $n = 100$, see Figure \ref{Fig: Eigval conditionner}.}
    \label{Fig: Pointwise eigenvalue convergence}
\end{figure} 

In conclusion, in Theorem \ref{thm: convergence of DoS} we demonstrated that the density of states of $\mathbf{T}_n(f)$ and $\mathbf{T}_n(f\circ p)$ agree. However, this is not enough to guarantee pointwise convergence of the asymptotic spectrum, as there might be $o(n)$ many outliers without affecting the limiting distribution. Despite this, Figure \ref{Fig: Pointwise eigenvalue convergence} provides numerical corroboration that the transform $\mathbf{T}_n(f) \to \mathbf{T}_n(f\circ p)$ can be seen as an asymptotic similarity transform.

\section{Non-Hermitian Bloch Hamiltonians: Open and Periodic boundary Conditions}\label{sec: Bloch Hamiltonian}

Toeplitz matrix models find applications especially in the theory of condensed matter, for example, in tight-binding electron models and open crystal structures \cite{STEY1973213, Marques_2020}. In the limit as the matrix size tends to infinity, the energy spectrum becomes a piecewise continuous function, which we shall refer to as the open limit.
Non-Hermitian Hamiltonians are generally sensitive to boundary conditions, and the asymptotic spectra under open boundary conditions generally do not agree with the Bloch spectrum predicted for periodic boundary conditions \cite{Yang_2020, guo2021analysisbulkboundarycorrespondence, Ji_2024, PhysRevB.105.045422}.
However, using the insights gained in Section \ref{sec: Toeplitz Analysis}, in particular Theorem \ref{Thm: real sectra criterion} will be used to provide a mathematical foundation for the generalised Brillouin zone, capable of capturing the open spectra of non-Hermitian Hamiltonians \cite{PhysRevLett.123.066404}.
In the generalised Brillouin zone, the Bloch wave number is complex valued \cite{debruijn2024complexbandstructuresubwavelength}.
In systems involving Bloch Hamiltonians, the complex valued Bloch parameter is often called an \emph{imaginary gauge transformation}, relating non-reciprocal systems to equivalent Hermitian systems via a similarity transformation. Moreover, \Cref{Thm: real sectra criterion} gives a visually striking condition: if the generalised Brillouin zone is a $C^1$ polar curve, then the energy spectrum under open boundary conditions will be real.

For non-Hermitian Hamiltonians, three limits are considered. Periodic boundary conditions (PBC), semi-infinite boundary conditions (SIBC) and open boundary conditions (OBC). In \cite{HU202551, Okuma2020}  an imaginary gauge transformation on the Bloch Hamiltonian is considered, 
\begin{equation}\label{eq: gauoge transform}
    H(e^{\i\alpha}) \mapsto H\big(e^{\i(\alpha + \i r)}\big) =: H_r.
\end{equation}
Note that the imaginary gauge potential $r$ is independent of the Bloch parameter $\alpha$ and will therefore be denoted as \emph{constant gauge transform}.
This transformation does not alter the OBC spectra because it is implemented as a similarity transformation at the matrix level. 
By \Cref{thm: schmidt spitzer magnitude}, the spectrum of the open limit of a non-Hermitian resonator chain is given by the intersection of the spectra of the Hamiltonian under all constant imaginary gauge transformations \cite{NonHermitianTopoloficalReview}, that is
\begin{equation}\label{OBC infinite intersection}
    \sigma_{\mathrm{OBC}}(H) = \bigcap_{e^{-r} \in (0, \infty)} \sigma_{\mathrm{SIBC}}(H_r).
\end{equation}
The downside of this characterisation is that it is in general hard to conceptualise infinite intersections. We may evade taking infinite intersections in \eqref{OBC infinite intersection} provided that the assumptions of Theorem \ref{Thm: real sectra criterion} are met. In this case, the open limit is captured by evaluating the Bloch Hamiltonian along the  curve specified by $p:\mathbb{T}\to\C$, such that
\begin{equation}\label{eq:gbz}
     \sigma_{\mathrm{OBC}}(H) = \bigcup_{\alpha \in [-\pi,\pi]} \sigma\Big( H\big(p(e^{\i\alpha})\big) \Big). 
\end{equation}
The key assumption of \Cref{Thm: real sectra criterion} is that the generalised Brillouin zone is spanned by a polar curve $p(\mathbb{T})$. This assumption naturally appears in \eqref{eq:gbz}: the imaginary gauge transform $e^{\i\alpha} \to p(e^{\i\alpha})$ is uniquely specified by the Bloch parameter $\alpha$. 

Depending on the relative position of the generalised Brillouin zone with respect to the unit torus, we may observe left  or right eigenmode condensation. When the generalised Brillouin zone crosses the unit circle, bulk eigenmodes become partially left-localised and partially right-localised \cite{PhysRevB.109.035119, PhysRevB.110.205429}. This is numerically illustrated in Figure \ref{Fig: Hamiltonian crossing}.

\begin{figure}[h]
    \centering
    \subfloat[][Generalised Brillouin zone (solid line) and the unit torus (dashed).]%
    {\includegraphics[height=0.3\linewidth]{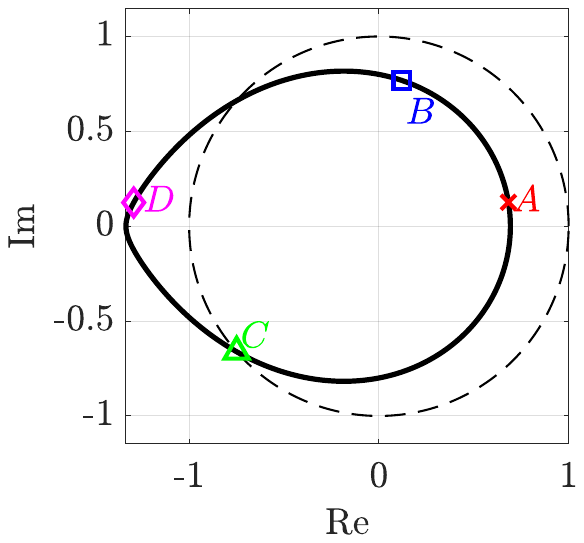}}\quad
    \subfloat[][Eigenvectors associated to the specific quasimomenta.]%
    {\includegraphics[height=0.31\linewidth]{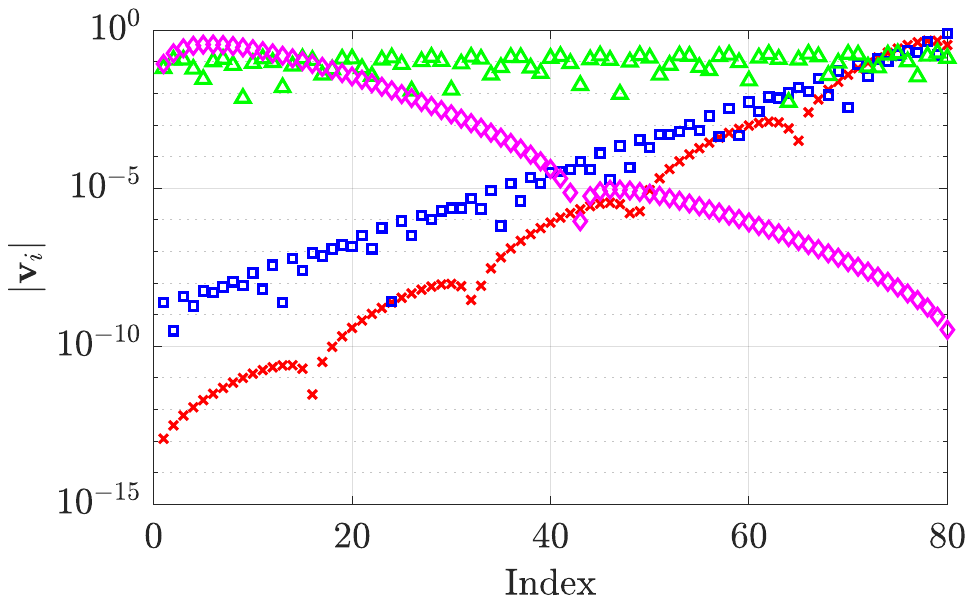}}
    \caption{Generalised Brillouin zone for a long-range non-Hermitian Hamiltonian $H(z) = 0.6z^{-2} + 2.5z^{-1} + 3.2 + 1.6z$. As the Brillouin zone is traced out by a $C^1$ polar curve, by Theorem \ref{Thm: real sectra criterion}, the open spectrum will be real. The decay rate of the eigenmodes associated to certain quasimomenta is correctly predicted by the generalised Brillouin zone.}
    \label{Fig: Hamiltonian crossing}
\end{figure}

Moreover, it has been observed physically that cusps in the generalized Brillouin zone (GBZ) correspond to exceptional points, at which the spectrum in the open limit may admit complex branches \cite{PhysRevResearch.2.043045, PhysRevLett.132.050402, PhysRevLett.123.066404}. Although non-Bloch PT symmetry breaking occurs through the formation of such cusps, the spectrum may still remain real precisely at the cusp itself. For this reason, we exclude cusps altogether in Assumption~\ref{Confluence Assumption}.

\section{Concluding remarks}\label{sec: Conclusion}

In this work, we analysed the asymptotic spectra of banded non-Hermitian Toeplitz matrices and established a rigorous mathematical foundation for the generalised Brillouin zone in non-Hermitian systems. The symmetrisation introduced in Section~\ref{Sec: Asymptotic Similarity} also suggests a practical route to computing the spectra of large banded non-Hermitian Toeplitz matrices, which are notoriously ill-conditioned.
A dedicated numerical study of this preconditioning strategy will be conducted in forthcoming work, together with a higher order expansion of the symmetrised symbol, which renders the $\ell^1$ spectral distance $d_\sigma$ convergent at order $\mathcal{O}(n^{-1})$. Furthermore, we plan to illustrate these results in a concrete PDE setting, showing that for periodic systems of non-Hermitian partial differential equations, the complex quasimomentum must also depend on the Floquet parameter. It would also be interesting to explore applications to specific models, such as the long-range Hatano-Nelson model or the subwavelength non-Hermitian skin effect.

\section{Acknowledgement}
The authors thank Clemens Thalhammer for pointing out an error in an earlier version of Theorem \ref{thm: convergence of DoS}.

\section{Conflict of interest}
The authors have no competing interests to declare.

\section{Data availability} \label{Sec: Data availability}
The \texttt{Matlab} code for the numerical experiments developed in this work is openly available in the following repository:
\url{https://github.com/yannick2305/OpenLimitToeplitz}.

\printbibliography

\appendix

\section{Confluent roots: Obstruction in the proof strategy}\label{appendix: Counterexample Confluent roots}

In this section, we examine where the proof strategy for Theorem \ref{Thm: real sectra criterion} may fail when the assumptions of distinct roots is not satisfied. In this setting the technique in the proof fails as the path $\tau$ to the real axis might be chosen in such a way that the roots become confluent on the real axis. This could lead to a possible bifurcation into the complex plane. Though we have not yet observed this for some symbol function, as effectively the parameters are poised on a set of measure zero, we will present a possible obstruction in Figure \ref{Fig: counterexample confluent roots}. We recall that following Conjecture \ref{conjecture: real spectra} the result is commonly believed to hold even in the general case \cite{CorrectionRealAsymptoticSpectrum}, but as of today we can not prove the most general setting yet.

\begin{figure}[h]
    \centering
    \subfloat[][Complex band structure with touching gap bands, i.e. the root modulus in no longer strictly separated.]%
    {\includegraphics[width=0.48\linewidth]{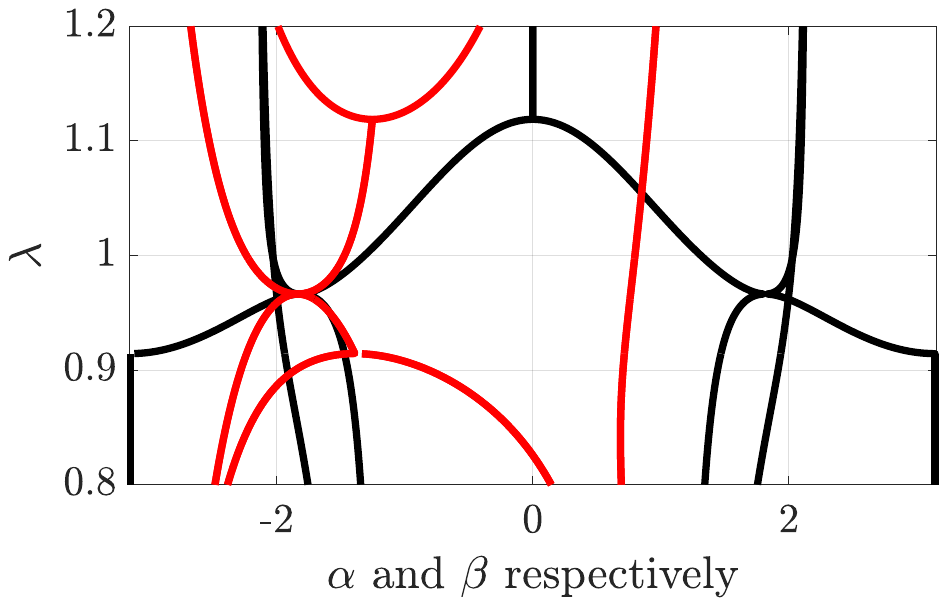}}\quad
    \subfloat[][The set $\mathbf{Z}(f_m)$ passes over complex confluent roots, and therefore does not form a closed polar curve. ]%
    {\includegraphics[width=0.48\linewidth]{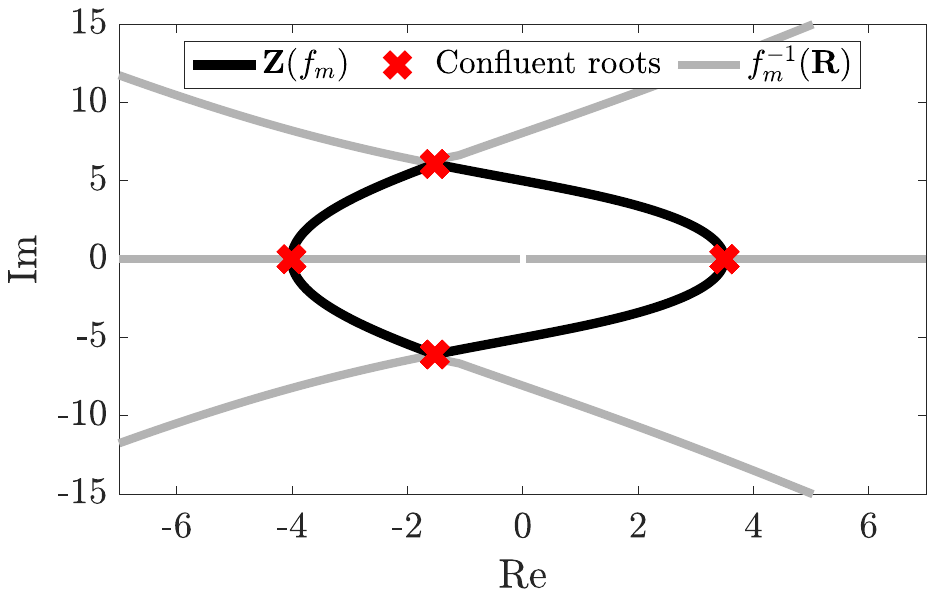}}
    \caption{The set $\mathbf{Z}(f_m) \subseteq f_m^{-1}(\R)$ is no longer parametrised by a polar curve, as by Remark \ref{rem: real and imag confluent roots}, the complex confluent root is not in $\mathbf{Z}(f_m)$. Therefore $\mathbf{Z}(f_m)$ consists of two disjoint algebraic curves and are thus not spanned by a polar curve. The touching of the complex bands in (A) indicates the presence of confluent complex roots. Computation performed for  $m = 3$, $\kappa = 2.3052155325$, $\rho = 6.5$.}
    %p = 2.305215532501812
    \label{Fig: Preimage confluent roots}
\end{figure}  

If confluent roots were permitted, one could, at least in theory, obtain a situation like the one shown in Figure \ref{Fig: counterexample confluent roots}. We stress that the symbol function and the open spectrum shown are schematic illustrations rather than plots of a specific symbol function. Their purpose is solely to demonstrate potential issues in the confluent case, where a bifurcation in the complex plane may occur.

\begin{figure}[h]
    \centering
    \subfloat[][Solid blue line is the open limit and the dashed line corresponds to $f(r\mathbb{T})$.]%
    {\includegraphics[width=0.45\linewidth]{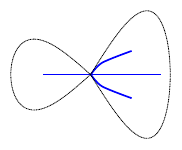}}\qquad\qquad
    \subfloat[][Solid blue line is the open limit and the dashed line corresponds to $f(\tilde{r}\mathbb{T})$.]%
    {\includegraphics[width=0.45\linewidth]{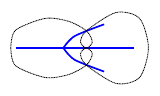}\vspace{8mm}}
    \caption{A schematic counterexample illustrating a potential failure point for complex-valued confluent roots, specifically showing how the key argument in the proof of Theorem \ref{Thm: real sectra criterion} can break down even when $\alpha(\lambda)$ is continuous. Possibly, this could lead to a bifurcation of $\sigma_{\mathrm{open}}$ into the complex plane.}
    \label{Fig: counterexample confluent roots}
\end{figure}

\begin{figure}[h]
    \centering
    \subfloat[][Solid blue line is the open limit and the dashed line corresponds to $f(r\mathbb{T})$ for $r = 4.97$.]%
    {\includegraphics[width=0.45\linewidth]{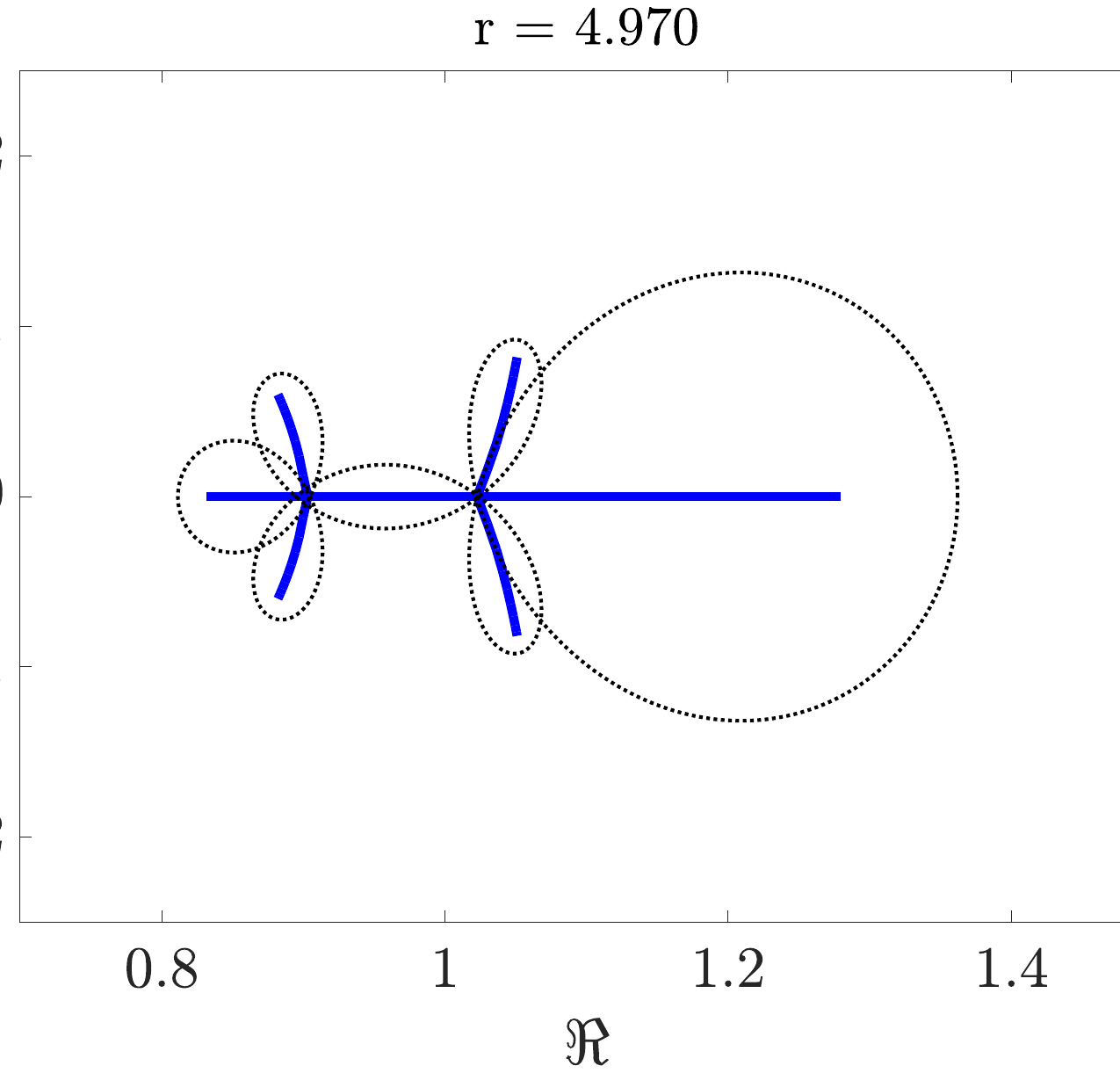}}\qquad\qquad
    \subfloat[][Solid blue line is the open limit and the dashed line corresponds to $f(r\mathbb{T})$ for $r = 4.7$.]%
    {\includegraphics[width=0.45\linewidth]{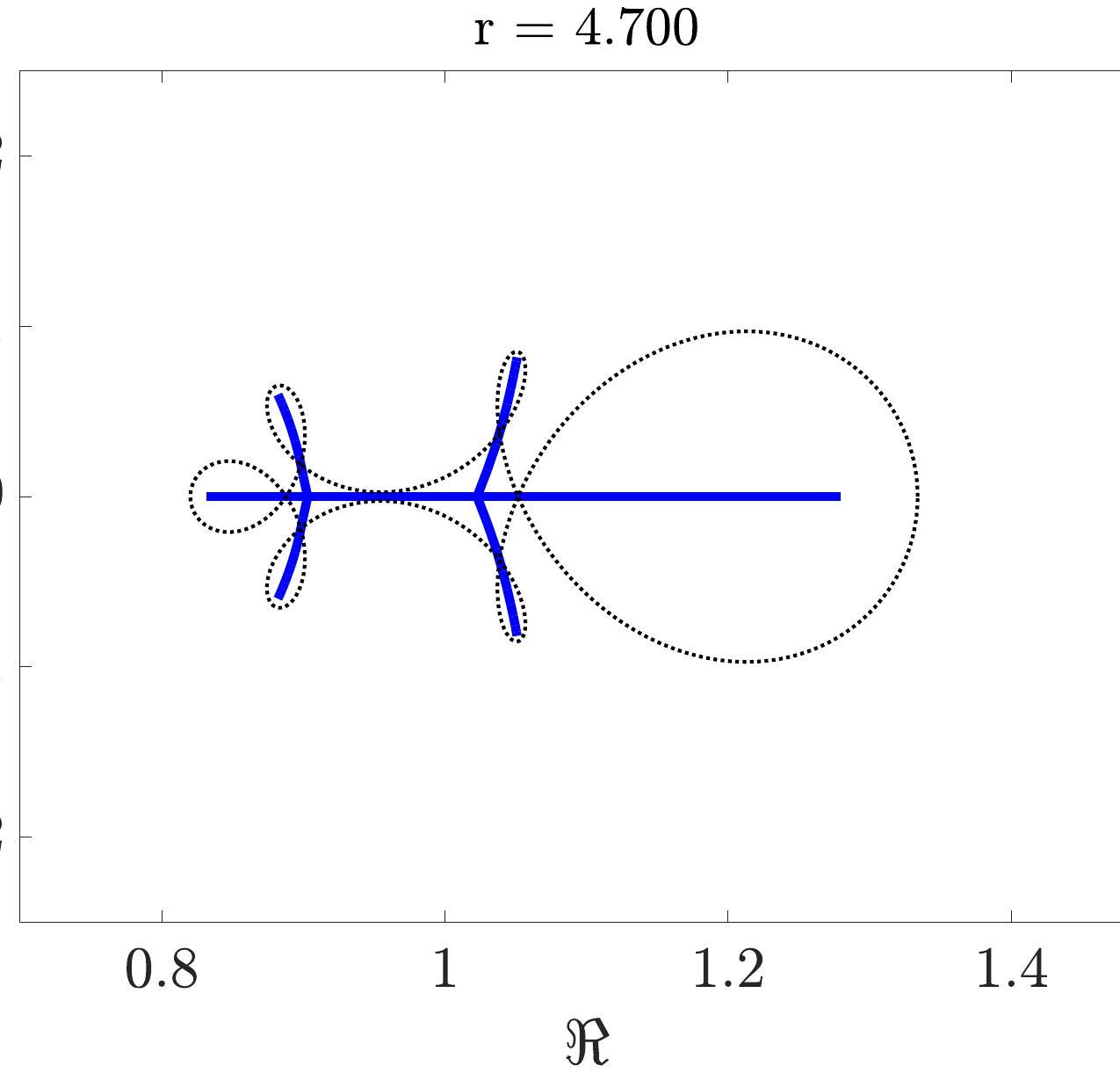}\vspace{4mm}}
    \caption{From visual inspection, it is immediately apparent that $\alpha(\lambda)$ is not continuous at the point where the open limit branches into the complex plane. However, if the set $\mathbf{Z}(f_m)$ is parametrised by a polar curve according to \eqref{eq: polar curve bijection}, then $\alpha(\lambda)$ is known to be continuous, which rules out the appearance of such branching behaviour as shown in this figure. Computation performed for $m = 3$, $\kappa = 1$ and $\rho = 6$.}
    \label{Fig: Continuous alpha curve}
\end{figure}

The next result numerically illustrates that the assumptions in Theorem \ref{Thm: real sectra criterion} are stronger than necessary. Namely, the assumption of Conjecture \ref{conjecture: real spectra} that $f_m^{-1}(\R)$ contains a Jordan curve seems to be optimal, this is illustrated in Figure \ref{Fig: open spectrum double jordan curve}.

\begin{figure}[h]
    \centering
    \subfloat[][Complex band structure with non-monotone $\alpha(\lambda)$ band.]%
    {\includegraphics[width=0.48\linewidth]{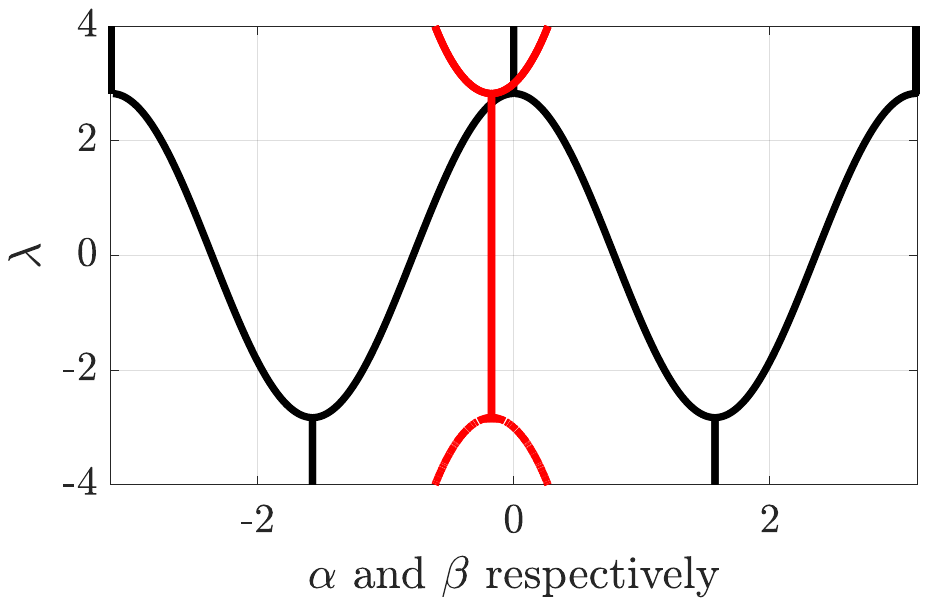}}\quad
    \subfloat[][The complex valued confluent roots are not part of the set $\mathbf{Z}(f_m)$.]%
    {\includegraphics[width=0.48\linewidth]{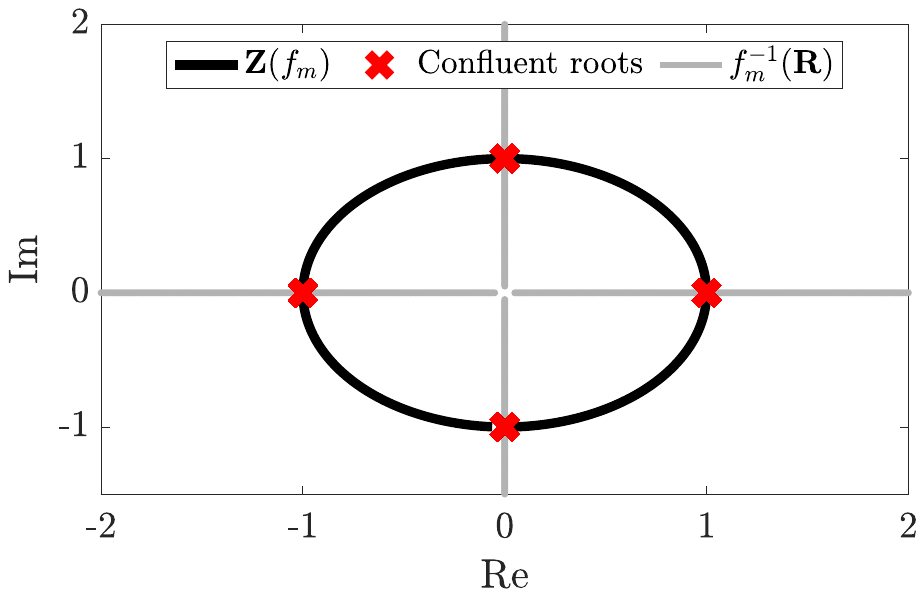}}
    \caption{The complex-valued confluent points do not belong to the set $\mathbf{Z}(f_m)$ introduced in \eqref{set: non confluent CBS}. As a consequence, $\mathbf{Z}(f_m)$ is not generated by a polar curve, and Theorem \ref{Thm: real sectra criterion} does not apply. Nonetheless, the open spectrum remains real. Computation performed for a symbol function $f_m(z) = z^{-2} + 2z^2$.}
    \label{Fig: open spectrum double jordan curve}
\end{figure}

\section{Numerical Analysis}\label{sec: Numerical analysis}
In this section, we present the numerical procedure employed to compute the Fourier coefficients $(\hat{f\circ p})_k$.
For a given range, we compute the roots of $z^m\bigl(f_m(z)-\lambda\bigr)$ sorted by ascending magnitude and keep the roots if $|z_m(\lambda)| = |z_{m+1}(\lambda)|$. The set of roots that satisfy this condition constitutes a discrete sampling of a continuous curve $p(e^{\i\theta})$, where the sampling of the curve is given by non-uniform angles $\theta_k$ for $1\leq k \leq N$. Then the Fourier coefficients are evaluated by means of the Fast Fourier transform (FFT) with non-uniform sampling.
By the Nyquist–Shannon sampling theorem for the Fast Fourier transform, the sample size $N$ must be at least twice the bandwidth of the signal to avoid aliasing.
Because the sample size remains fixed (and does not scale with the size of the matrix), entries outside a certain bandwidth will eventually be polluted due to aliasing. This is the reason we discard the high-frequency coefficients of $(\hat{f\circ p})_k$.
This can be formally established by the following result,
\begin{corollary}\label{Cor: aliasing Fourier coefficients}
    Consider the Hermitian Toeplitz matrix $\mathbf{T}_n(f\circ p)$ as well as the $t$-banded approximation $\mathbf{T}_n\bigl((f\circ p)_t\bigr)$, and let $\lambda_k$ and $\tau_k$, for $1\leq k \leq n$ be the sorted eigenvalues, then it holds that
    \begin{equation}
        | \lambda_k - \tau_k| = \mathcal{O} \bigr(\sum_{|d| \geq t}(\hat{f\circ p})_d \bigr).
    \end{equation}
\end{corollary}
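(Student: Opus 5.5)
The approach is to treat $\mathbf{T}_n\bigl((f\circ p)_t\bigr)$ as a Hermitian perturbation of $\mathbf{T}_n(f\circ p)$ and apply Weyl's inequality for sorted eigenvalues. Since $f_m$ has real coefficients and $\lambda = f_m\circ p$ is real-valued and even in $\theta$ (see \eqref{eq: evenness}), the Fourier coefficients $c_d := (\hat{f\circ p})_d$ are real and satisfy $c_{-d} = c_d$. Hence $\mathbf{T}_n(f\circ p)$ is real symmetric. The $t$-banded approximation $(f\circ p)_t$ is obtained by setting $c_d=0$ for $|d|\geq t$. It is again a real even trigonometric polynomial, so $\mathbf{T}_n\bigl((f\circ p)_t\bigr)$ is real symmetric as well. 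Both matrices therefore have real spectra, and the sorted eigenvalues $\lambda_k,\tau_k$ are well defined.

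Next, I would write the difference as a Toeplitz matrix:
\begin{equation*}
    \mathbf{E}_n := \mathbf{T}_n(f\circ p) - \mathbf{T}_n\bigl((f\circ p)_t\bigr) = \mathbf{T}_n\bigl(f\circ p - (f\circ p)_t\bigr),
\end{equation*}
whose only nonzero diagonals are those with index $|d|\geq t$, each carrying the constant entry $c_d$. Weyl's inequality for Hermitian matrices then gives $|\lambda_k - \tau_k| \leq \lVert \mathbf{E}_n\rVert_2$ for every $1\leq k\leq n$.

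It remains to bound $\lVert \mathbf{E}_n\rVert_2$ uniformly in $n$. The truncation $\mathbf{P}_n$ does not increase the norm, so $\lVert \mathbf{T}_n(g)\rVert \leq \lVert \mathbf{T}(g)\rVert = \lVert g\rVert_{L^\infty(\mathbb{T})}$ for $g = f\circ p - (f\circ p)_t$. Alternatively, write $\mathbf{E}_n$ as a sum of shifted diagonals, each of norm $|c_d|$, which gives directly
\begin{equation*}
    \lVert \mathbf{E}_n\rVert_2 \leq \sum_{|d|\geq t} |c_d|.
\end{equation*}
This yields the claimed $\mathcal{O}$ bound, with constant $1$, uniformly in $n$ and $k$. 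The bound should be read with absolute values on the coefficients, or with their signed sum where the statement is meant loosely.

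The main obstacle is ensuring the tail is finite, that is, $\sum_d |c_d|<\infty$; otherwise the bound is vacuous. Since $f_m$ is analytic on $\C\setminus\{0\}$ and $r\in C^1$, the function $\lambda(\theta)=f_m(r(\theta)e^{\i\theta})$ is $C^1$ and $2\pi$-periodic, using $r(-\pi)=r(\pi)$ and $r'(-\pi)=r'(\pi)$. I would first try Bernstein's theorem, which requires Hölder regularity of order greater than $1/2$; $C^1$ regularity suffices. Failing that, a direct argument works: $c_d = (\i d)^{-1}\widehat{\lambda'}_d$, and by Cauchy--Schwarz and Parseval the tail satisfies $\sum_{|d|\geq t}|c_d| \leq \bigl(\sum_{|d|\ge t} d^{-2}\bigr)^{1/2}\lVert \lambda'\rVert_{L^2} = \mathcal{O}(t^{-1/2})$. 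This gives absolute summability, so the tail tends to zero as $t\to\infty$, and the corollary follows.
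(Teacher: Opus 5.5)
Your proposal is correct and follows the same route as the paper. You split off the tail Toeplitz matrix $\mathbf{E}_t$ built from the coefficients with $|d|\geq t$, bound its spectral norm by the tail sum, and conclude with Weyl's inequality. Your extra checks fill in points the paper's proof only asserts; it merely says the coefficients are ``decaying''. These checks are that both matrices are real symmetric, that the bound genuinely requires $\sum_{|d|\geq t}|(\hat{f\circ p})_d|$ rather than the signed sum, and that this tail is finite and $\mathcal{O}(t^{-1/2})$ because $f_m\circ p$ is $C^1$ and periodic.
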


\begin{proof}
    Let us start by writing
    \begin{equation}
        \mathbf{T}_n\bigl((f\circ p)_t\bigr) + \mathbf{E}_t = \mathbf{T}_n(f\circ p),
    \end{equation}
    where $\mathbf{E}_t$ is a Toeplitz matrix comprising the coefficients $(\hat{f\circ p})_k$, $t \leq |k| \leq n$. As the Fourier coefficients $(\hat{f\circ p})_{|k|}$ are decaying in $k$ we may estimate the norm,
    \begin{equation}
        \lVert \mathbf{E}_t \rVert_2 = \mathcal{O}\bigl(\sum_{|d| \geq t}(\hat{f\circ p})_d \bigr).
    \end{equation}
    The result then directly follows By Weyl's theorem on the spectral stability which completes the proof.
\end{proof}

\begin{figure}[h]
    \centering
    \subfloat[][Fourier coefficients computed for $100$ sampling points in the FFT.]%
    {\includegraphics[width=0.48\linewidth]{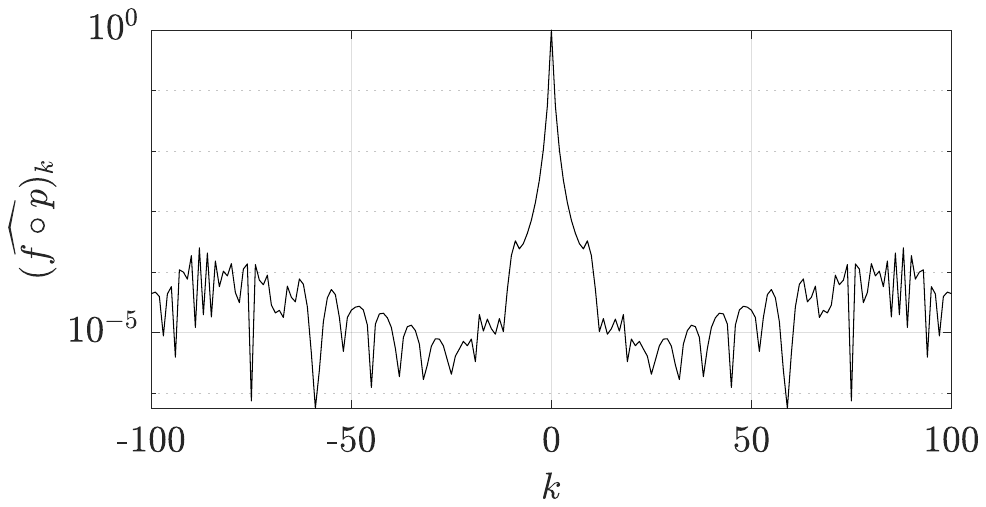}}\quad
    \subfloat[][Fourier coefficients computed for $1000$ sampling points in the FFT.]%
    {\includegraphics[width=0.48\linewidth]{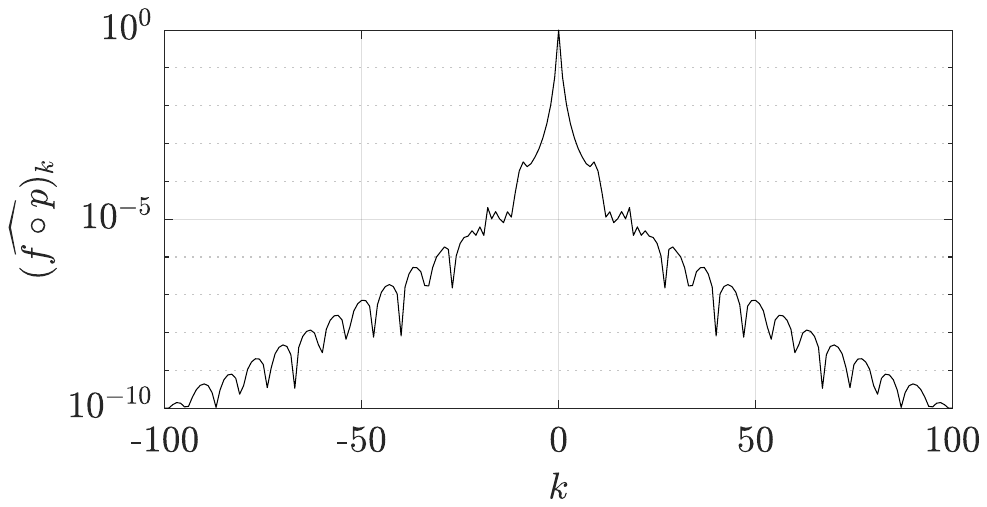}}
    \caption{Once the bandwidth exceeds a certain threshold, the numerically evaluated Fourier coefficients begin to oscillate instead of decaying, which motivates the need for Corollary \ref{Cor: aliasing Fourier coefficients}. }
    \label{Fig: Fourier coefficents composition}
\end{figure}

For analytic functions, the FFT is known to be exponentially convergent. However, for functions that are $C^k$, the numerical error is expected to be $\mathcal{O}(N^{-k})$.

\section{Derivation of the similarity transform}\label{Appendix: Similarity transform}
In this section, we will formally derive a similarity transform presented in Section \ref{Sec: Asymptotic Similarity} between $\mathbf{T}_n(f)$ and $\mathbf{T}_n(f\circ p)$ for general polar curves $p: \mathbb{T}\to\C$.
For the doubly infinite Toeplitz matrix $\mathbf{T}_n(f)$, commonly referred to as the Laurent operator, in the limit as $n\to\infty$ it holds
\begin{equation}
    \mathbf{T}_n(f) \approx \mathbf{M}^{-1}\mathbf{T}_n(f\circ p) \mathbf{M},
\end{equation}
implying that the matrices are asymptotically similar, where the entries of $\mathbf{M}$ are given by
\begin{equation}
(\mathbf{M})_{i, j} = \frac{1}{2\pi}\int_0^{2\pi} p(e^{\i \theta})^{-j} e^{\i\theta i} \d \theta, \quad i,j\in \Z,
\end{equation}
and correspond to the $i$-th Fourier coefficient of $p:\mathbb{T}\to\C$ raised to the $j$-th power.
To see this, consider the $m$-banded Toeplitz matrix $\mathbf{T}_n(f)$ that is generated by the sequence $\{a_k\}_{k\in \Z}$ such that
    \begin{equation}
        f(e^{\i\theta}) = \sum_{k = -m}^m a_k e^{-\i\theta k},\quad\text{and}\quad f\bigl(p(e^{\i \theta})\bigr) = \sum_{k = -m}^m a_k p(e^{\i \theta})^{-k}.
    \end{equation}
    Let us start by noting that the transformed Toeplitz matrix $\mathbf{T}_n(f\circ p)$ is generated by the sequence
    \begin{equation}
(\hat{f\circ p})_j = \frac{1}{2\pi}\int_0^{2\pi} f\bigl(p(e^{\i\theta})\bigr) e^{\i j \theta} \d \theta 
= \frac{1}{2\pi}\int_0^{2\pi} \sum_{k= -m}^m a_k p(e^{\i\theta})^{-k} e^{\i j \theta} \d\theta 
= \sum_{k= -m}^m a_k \cdot \frac{1}{2\pi}\int_{0}^{2\pi} p(e^{\i\theta})^{-k} e^{\i \theta j} \d \theta,
    \end{equation}
    As a result, it follows that,
    \begin{equation}
        c_n = \sum_{k = -m}^m
        (\mathbf{M})_{n, k} a_k, \quad\text{and}\quad f\bigl(p(e^{\i \theta})\bigr) = \sum_{k\in \Z} c_k e^{-\i\theta k}.
    \end{equation}
    We have that
    \begin{equation}\label{eq: before sim}
        \bigl(\mathbf{M}\mathbf{T}(f) \bigr)_{i,j} = \sum_{k \in \Z} (\mathbf{M})_{i,k}\bigl(\mathbf{T}(f)\bigr)_{k,j}  = \sum_{k\in \Z} (\mathbf{M})_{i,k} a_{k-j} = \sum_{k'= -m}^m (\mathbf{M})_{i, k' + j}a_{k'},
    \end{equation}
    as well as
    \begin{equation}\label{eq: after sim}
        \bigl(\mathbf{T}(f\circ p) \mathbf{M}\bigr)_{i,j} = \sum_{k \in \Z} \bigl(\mathbf{T}(f\circ p)\bigr)_{i,k} (\mathbf{M})_{k, j} = \sum_{k'= -m}^m \left( \sum_{k \in \Z} (\mathbf{M})_{i-k, k'}(\mathbf{M})_{k, j} \right) a_{k'}.
    \end{equation}

    The only thing left to verify is the equality between \eqref{eq: before sim} and \eqref{eq: after sim}. Since $(\mathbf{M})_{\cdot, j}$ is the Fourier coefficient sequence of $p^{-j}$ and $p^{-k'}p^{-j} = p^{-(k'+j)}$, the convolution theorem yields
    \begin{equation}
       \sum_{k \in \Z} (\mathbf{M})_{i-k, k'}(\mathbf{M})_{k, j} = \bigl(\widehat{p^{-k'}} * \widehat{p^{-j}}\bigr)_i = \bigl(\widehat{p^{-(k'+j)}}\bigr)_i = (\mathbf{M})_{i, k'+j},
    \end{equation}
    which is precisely what is required. We stress that asymptotic similarity alone is still insufficient to ensure pointwise convergence of the eigenvalues. Truncating the doubly infinite problem to a Toeplitz matrix with $\mathbf{P}_n$ and writing $\mathbf{Q}_n = \mathbf{I} -\mathbf{P}_n$, with $\mathbf{M}_n = \mathbf{P}_n\mathbf{M}\mathbf{P}_n$. The exact finite-section defect reads
    \begin{equation}\label{eq: edge projection}
        \mathbf{T}_n(f)\mathbf{M}_n - \mathbf{M}_n\mathbf{T}_n(f\circ p) = \mathbf{P}_n\mathbf{L}(f)\mathbf{Q}_n\mathbf{M}\mathbf{P}_n - \mathbf{P}_n\mathbf{M}\mathbf{Q}_n\mathbf{L}(f\circ p)\mathbf{P}_n =: \mathbf{E}_n
    \end{equation}
    The term $\mathbf{P}_n\mathbf{L}(f)\mathbf{Q}_n\mathbf{M}\mathbf{P}_n$ has rank less than $2m$, while the term $\mathbf{P}_n\mathbf{M}\mathbf{Q}_n\mathbf{L}(f\circ p)\mathbf{P}_n$ is exponentially localised in the two corners. Consequently \eqref{eq: edge projection} is mostly due to boundary effects which average out in the limit $n\to \infty$ and therefore precisely captures the convergence of the density of states.    
    Indeed as $\lvert \mathbf{E}_n\rvert \xrightarrow{n\to\infty} 0$, we know from Corollary \ref{cor: Asymptotic equivalence and DoS} that their densities of states coincide. However, this does not generally yield pointwise control of the eigenvalues.    
\end{document}